\numberwithin{equation}{section}
\renewcommand{\@biblabel}[1]{#1\hfill \hspace{-0.2cm}}
\newtheorem{theorem}{Theorem}
\newtheorem{lemma}[theorem]{Lemma}
\begin{document}

\title{}

\title{Mathematical Analysis, Forecasting and Optimal Control 
of HIV/AIDS Spatiotemporal Transmission with a Reaction Diffusion SICA Model}

% ----------------------------------------

\author{Houssine Zine\affil{1}, % zinehoussine@ua.pt | https://orcid.org/0000-0002-8412-7783
Abderrahim El Adraoui\affil{2}  % a.adraoui@live.fr  | https://orcid.org/0000-0002-2611-794X
and Delfim F. M. Torres\affil{1,}\corrauth} % delfim@ua.pt | https://orcid.org/0000-0001-8641-2505

\shortauthors{the Author(s)}

\address{\addr{\affilnum{1}}{Center for Research and Development in Mathematics and Applications (CIDMA),\\
Department of Mathematics, University of Aveiro, 3810-193 Aveiro, Portugal}
\addr{\affilnum{2}}{Laboratory of Analysis Modeling and Simulation (LAMS), 
Department of Mathematics and Computer Science, Faculty of Sciences Ben M'Sik, 
Hassan II University of Casablanca, Morocco}}

\corraddr{delfim@ua.pt}

% ----------------------------------------

\begin{abstract}
We propose a mathematical spatiotemporal epidemic SICA model 
with a control strategy. The spatial behavior is modeled 
by adding a diffusion term with the Laplace operator, 
which is justified and interpreted both mathematically and physically.
By applying semigroup theory on the ordinary differential equations, 
we prove existence and uniqueness of the global positive spatiotemporal 
solution for our proposed system and some of its important characteristics. 
Some illustrative numerical simulations are carried out 
that motivate us to consider optimal control theory. A suitable optimal control 
problem is then posed and investigated. Using an effective method based 
on some properties within the weak topology, we prove existence 
of an optimal control and develop an appropriate set 
of necessary optimality conditions to find the optimal control pair that 
minimizes the density of infected individuals and the cost 
of the treatment program.
\end{abstract}

\keywords{HIV/AIDS epidemiology; reaction-diffusion; spatiotemporal SICA model;
optimal control strategies; necessary optimality conditions.\\[0.10cm]
\textbf{Mathematics Subject Classification:} 49J15, 49K15, 76R50, 92D30.}

\maketitle

% ----------------------------------------

\section{Introduction}

The human immunodeficiency virus (HIV) causes millions of deaths to humans worldwide, 
being one of the most infectious and deadly virus \cite{MR4376326}.
The deterministic SICA model was introduced by Silva and Torres
in 2015, as a sub-model of a general Tuberculosis and HIV/AIDS 
(acquired immunodeficiency syndrome) co-infection problem \cite{9}. 
After that, it has been extensively used to investigate HIV/AIDS,
in different settings and contexts, using fractional-order derivatives \cite{12}, 
stochasticity \cite{13} and discrete-time operators \cite{MR4276109}, 
and adjusted to different HIV/AIDS epidemics, as those in Cape Verde \cite{10} 
and Morocco \cite{MR3999700}. 

One of the fundamental characteristics of SICA modeling is that it provides 
adequate but simple mathematical models that help to characterize and understand some 
of the essential epidemiological factors leading to the spreed of the AIDS disease.
In such models, the susceptible population $S$ is nourished by the recruitment of individuals 
into the population at a rate $\lambda$. All individuals are exposed to natural death, 
at a constant rate $\mu$. Individuals $S$ are susceptible to HIV infection 
from an effective contact with an individual carrying the HIV, at the rate 
$\dfrac{\beta}{N}\left(I+\eta_C C+\eta_A A\right)$,
where $I$, $C$ and $A$ denote, respectively, the infected, chronic (under treatment) 
and AIDS individuals, $N$ represents the total number of individuals in the population under study,
that is, $N$ is the sum of $S$, $I$, $C$ and $A$ individuals, and 
$\beta$, $\eta_C$ and $\eta_A$ are parameters that depend on the particular
situation under study. For a survey on SICA models for HIV transmission,
showing that they provide a good framework for interventions and strategies to fight against 
the transmission of the HIV/AIDS epidemic, we refer the reader to \cite{MR4175342}.

It is well known that reaction-diffusion equations are commonly used to model a variety 
of physical and biological phenomena \cite{Laaroussi2020,15,16,Smoller,Wang2,Wang1}. 
Such equations describe how the concentration or density distributed in space varies 
under the influence of two processes: (i) local interactions of species and 
(ii) diffusion, which causes the spread of species in space. 
Recently, reaction-diffusion equations have been used by many authors in epidemiology 
as well as virology, see, e.g., \cite{14}, where a mathematical model is proposed 
to simulate the hepatitis B virus infection with spatial dependence,
or the non-theoretical reviews \cite{Ewald,Hufsky}:
in \cite{Ewald}, host-pathogen interactions are described by 
different temporal and spatial scales, while \cite{Hufsky}
covers bioinformatics workflows and tools for the routine detection 
of the SARS-CoV-2 infection. Here we propose, for the first time in the literature,
to use SICA modeling with $S$, $I$, $C$ and $A$ (thus, also $N$) as functions of both time $t$ 
and space $x$. The spatial effect plays a crucial role in the spread of the virus. 
In order to well describe this phenomenon, we incorporate terms that model
the spatial diffusion in each compartment, by adding $\Delta S$,
$\Delta I$, $\Delta C$ and $\Delta A$ in the classical SICA model system. 
By taking into account the spatiotemporal diffusion allow us not to neglect 
a good part of compartments' inputs-outputs.

The paper is organized as follows. We begin with some preliminaries 
on the physical interpretation of the Laplacian in Section~\ref{sec:2}. 
The spatiotemporal SICA model is then introduced in Section~\ref{sec:3}
and its mathematical analysis is given in Section~\ref{sec:4}
where, by using semigroup theory \cite{17,18}, we prove existence 
and uniqueness of a strong nonnegative solution to the system
(see Theorem~\ref{thm}). In Section~\ref{sec:5}, we show some numerical 
examples that motivate us to consider optimal control.
An optimal control problem is then formulated and existence of a solution is 
established (see Theorem~\ref{thm:02}). Next, we obtain in Section~\ref{sec:6} 
a set of necessary optimality conditions that characterize the optimal solution.
We end with Section~\ref{sec:7} of conclusions, pointing also some future
directions of research.

% ----------------------------------------

\section{Preliminaries: interpretation of the Laplacian}
\label{sec:2}

Let $\nabla^2$ be the Laplacian in two dimensions expressed by
$$
\nabla^2 = \frac{\partial^2}{\partial x^2} + \frac{\partial^2}{\partial y^2}.
$$
Suppose that, at a point $O$, taken as the origin of the system of axises $Oxy$, 
a field $f$ takes the value $f_0$. Consider an elementary square with side $a$ 
whose edges are parallel to the coordinate axises and whose center merges 
with the origin $O$. The average value of $f$ in this elementary cube, that is, 
the mean value of $f$ in the neighborhood of the point $O$, is given by the expression 
$$
\overline{f} = \frac1{a^2} \int_\mathcal C f(x,y)\;\mathrm dx \mathrm dy,
$$
where the two integrations relate to the rectangle 
$C = [-\frac{a}{2}, \frac {a}{2}]^2 $. At an arbitrary point $P(x, y)$ 
in the neighborhood of $O = (0,0)$, we develop $f$ in Taylor--Maclaurin series. 
Thus,
\begin{align*}
f(x,y) = f_0 
+ \left(\frac{\partial f}{\partial x}\right)_0 x 
+ \left(\frac{\partial f}{\partial y}\right)_0 y 
+ \frac{1}{2} \left[
\left(\frac{\partial^2f}{\partial x^2}\right)_0 x^2 
+ \left(\frac{\partial^2f}{\partial y^2}\right)_0 y^2  \right] 
+ \left(\frac{\partial^2f}{\partial x\partial y}\right)_0 xy 
+ O(x^2+y^2).
\end{align*}
On one hand, the odd functions in this expression provide, by integration from 
$-\frac{a}{2}$ to $\frac {a}{2}$, a zero contribution to $\overline{f}$. For example,
$$
\int_\mathcal C x\;\mathrm dx \mathrm dy
= \left( \frac{\left(\frac a2\right)^2}2 
- \frac{\left(\frac{-a}2\right)^2}2\right) 
\left(\frac a2 - \frac{-a}2\right) = 0.
$$
On the other hand, each even function provide 
a contribution of $\frac{a^4}{12}$. For example,
$$
\int_\mathcal C x^2\;\mathrm dx \mathrm dy  
= \left(\frac{\left(\frac a2\right)^3}3 
- \frac{\left(\frac{-a}2\right)^3}3\right) 
\left(\frac a2 - \frac{-a}2\right)  = \frac{a^4}{12}.
$$ 
Using the Fubini--Tonnelli theorem, we get
$$
\int_\mathcal C xy\;\mathrm dx \mathrm dy=0.
$$
We deduce that
$$
\overline f \approx  f_0 + \frac{a^4}{24}
\left(\frac{\partial^2f}{\partial x^2} 
+ \frac{\partial^2f}{\partial y^2}\right)_0
$$
and 
$$
\overline f \approx  f_0 + \frac{a^4}{24} \bigl(\nabla^2f\bigr)_0.
$$
As the point $O$ has been chosen arbitrarily, we can assimilate 
it to the current point $P$ and drop the index $0$. Therefore, we 
obtain the expression
$$
\nabla^2f\approx \frac{24}{a^4} \left(\overline f - f\right),
$$
the interpretation of which is immediate: the quantity $\nabla^2 f$ 
is approximately proportional to the difference $\overline {f} - f$. 
The constant of proportionality is worth $\frac{24}{a^4}$ 
in Cartesian axises. In other words, the quantity $\nabla^2 f$ 
is a measure of the difference between the value of $f$ 
at any point $P$ and the mean value $\overline{f}$ 
in the neighborhood of point $P$. 

% ----------------------------------------

\section{The spatiotemporal mathematical SICA model}
\label{sec:3}

In \cite{10}, Silva and Torres proposed the following epidemic SICA model:
\begin{equation}
\label{SICA:ST:Ref10}
\left\lbrace
\begin{aligned}
\frac{dS(t)}{dt} 
&=  \Lambda -\beta \left( I (t) +\eta _{C}\cdot C(t) 
+\eta_{A} \cdot A (t) \right) \cdot S (t) -\mu S (t),\\ 
\frac{dI(t)}{dt} 
&= \beta \left( I (t) +\eta_{C}
\cdot C\left(t\right) +\eta _{A} \cdot A (t) \right) \cdot S (t) 
-\xi_{3} I (t) +\gamma A (t) +\omega C (t),\\ 
\frac{dC(t)}{dt}
&= \phi I (t) -\xi_{2}C\left( t\right),\\ 
\frac{dA(t)}{dt} 
&= \rho I (t) -\xi_{1}A\left( t\right).
\end{aligned}
\right.
\end{equation}
The limitation of the temporal dynamical system \eqref{SICA:ST:Ref10} 
to give a good description of the spread of the virus in the space is obvious. 
To bridge this gap, we suggest to use of the Laplacian operator 
as interpreted in Section~\ref{sec:2}. In concrete,
we extend the deterministic epidemic SICA model 
\eqref{SICA:ST:Ref10} as follows:
\begin{equation}
\label{SICA2}
\left\lbrace
\begin{aligned}
\frac{\partial S (t,x)}{\partial t} 
&= d_S\Delta S (t,x)+ \Lambda -\beta \left( I (t,x) 
+\eta _{C}\cdot C(t,x) +\eta _{A}\cdot A(t,x) \right)
\cdot S (t,x) -\mu S(t,x)\\
&\qquad +u (t,x) I(t,x),\\ 
\frac{\partial I (t,x)}{\partial t} 
&=d_I\Delta I (t,x) + \beta \left( I (t,x) +\eta _{C}\cdot C(t,x) 
+\eta_{A}\cdot A(t,x) \right)\cdot S (t,x) -\xi_{3}I (t,x) 
+\gamma A (t,x) \\
&\qquad +\omega C (t,x) -u (t,x)I (t,x),\\ 
\frac{\partial C (t,x)}{\partial t}
&=d_C\Delta C (t,x) + \phi I (t,x) -\xi_{2}C( t,x),\\ 
\frac{\partial A (t,x)}{\partial t} 
&=d_A\Delta A (t,x) + \rho I (t,x) -\xi_{1}A(t,x),
\end{aligned}
\right.
\end{equation}
where $\Delta$ is the Laplacian 
in the two-dimensional space $(t,x)$ 
and $u:[0;T]\times \Omega\longrightarrow [0;1[$ 
is a control that permits to diminish the number 
of infected individuals and to increase that of susceptible by devoting 
some special treatment to the most affected persons. The description 
of the parameters of model \eqref{SICA2} is summarized in Table~\ref{tab:1}.
% ----------------------------------------------
\begin{table}[ht!]
\caption{Description of the parameters of the 
spatiotemporal SICA epidemic model \eqref{SICA2}.}
\label{tab:1}
\begin{center}
\begin{tabular}{lll}
Symbol&&Description\\ \hline
$\Lambda$&&Recruitment rate\\
$\mu$&&Natural death rate\\
$\beta$&&HIV transmission rate\\
$\eta_C$&&Modification parameter\\
$\eta_A$&&Modification parameter\\
$\phi$&&HIV treatment rate for $I$ individuals\\
$\rho$&&Default treatment rate for $I$ individuals\\
$\gamma$&&AIDS treatment rate\\
$\omega$&&Default treatment rate for $C$ individuals\\
$d$&&AIDS induced death rate\\
$d_S$&&Diffusion of susceptible individuals\\
$d_I$&&Diffusion of infected individuals with no AIDS symptoms\\
$d_C$&&Diffusion of chronic individuals\\
$d_A$&&Diffusion of infected individuals with AIDS symptoms\\
\hline
\end{tabular}
\end{center}
\end{table}
% ----------------------------------------

\section{Existence and uniqueness of a strong nonnegative solution}
\label{sec:4}

In order to prove existence and uniqueness of a strong solution to system \eqref{SICA2},
we define some tools. Consider the Hilbert spaces $H(\Omega)= (L_2(\Omega))^4$,  
$H^1(\Omega)=\left\{u\in L_2(\Omega):\; \dfrac{\partial u}{\partial x}\in L_2(\Omega)\;
\text{and}\;\dfrac{\partial u}{\partial y}\in L_2(\Omega)\right\}$ 
and $H^2(\Omega)=\left\{u\in H^1(\Omega):\; \dfrac{\partial^2 u}{\partial x^2},
\dfrac{\partial^2 u}{\partial y^2}, \dfrac{\partial^2 u}{\partial x\partial y}, 
\dfrac{\partial^2 u}{\partial y\partial x} \in L_2(\Omega)\right\}$.
Let $L^2(0,T;H^2(\Omega))$ be the space of all strongly 
measurable functions $v:[0,T]\longmapsto H^2(\Omega)$ such that
$$
\int\limits_{0}^{T} \lVert v (t,x)\rVert _{H^2(\Omega)}\;\mathrm dt<\infty
$$
and $L^{\infty}( 0, T ; H^1(\Omega))$ be the set of all functions 
$v:[0,T]\longmapsto H^1(\Omega)$ verifying
$$
\underset{t\in [0,T]}{\sup}(\lVert v(t,x)\rVert _{H^1(\Omega)})<\infty.
$$
The norm in $L^{\infty}( 0, T ; H^1(\Omega))$ is defined by
$$
\lVert v \rVert_{L^{\infty}( 0, T ; H^1(\Omega))} 
:=\inf\left\{c\in \mathbb{R}_+
: \lVert v (t,x)\rVert _{H^1(\Omega)}<c\right\}.
$$

Our model is equivalent to 
\begin{equation}
\label{eqz}
\frac{\partial z (t,x)}{\partial t}=Az (t,x)+g(t,z (t,x)),
\end{equation}
where $z=(z_1,z_2,z_3,z_4)=(S,I,C,A)$ and $g=(g_1,g_2,g_3,g_4)$ is defined by
\begin{equation*}
\begin{cases}
g_1=-\beta(z_2+\eta_Cz_3+\eta_Az_1)z_1-\mu z_1+\Lambda+uz_2,\\
g_2=\beta(z_2+\eta_Cz_3+\eta_Az_1)z_1-\xi_3 z_2+\gamma z_4+\omega z_3 -uz_2,\\
g_3=\varPhi  z_2-\xi_2 z_3,\\
g_4=\rho z_2-\xi_1z_4.
\end{cases}
\end{equation*}
For all $i\in \{1,2,3,4\}$, 
$$
\frac{\partial z_i}{\partial t}=d_i\Delta z_i+g_i(z (t,x)).
$$
Let $A$ denote the linear operator defined from 
$D(A)\subset H(\Omega)$ to $H(\Omega)$ by 
$$
Az=\left(d_S \triangle z_1,d_I \triangle z_2,
d_C \triangle z_3,d_A \triangle z_4\right)
$$
with
$$
z \in D(A)=
\left\{z=(z_1,z_2,z_3,z_4)\in \left( H^2(\Omega)\right)^4 
:\;\dfrac{\partial z_1}{\partial \eta}=\dfrac{\partial z_2}{\partial \eta}
=\dfrac{\partial z_3}{\partial \eta}=\dfrac{\partial z_4}{\partial \eta}=0 
\quad \text{on} \quad  \partial \Omega \right\}
$$
and $U_{ad}$ be the admissible control set defined by
\begin{equation}
\label{Uad}
U_{ad}=\left\{u\in L^2(Q),0 \leq u \leq 1 \quad \text{a.e. on }Q\right\}
\end{equation}
with $Q=[0,T]\times \Omega$ and $\Omega$  a bounded domain in $\mathbb{R}^2$ 
with smooth boundary $\partial \Omega$.

To obtain our next result, we employ semi-group theory \cite{18}
to prove existence and uniqueness of a global 
nonnegative solution to the considered system.

\begin{theorem}
\label{thm}
Let $\Omega$ be a bounded domain from $\mathbb{R}^2$ 
with a boundary of class $C^{2+\alpha}$, $\alpha>0$. 
For nonnegative parameters of the spatiotemporal SICA model \eqref{SICA2}, 
$u\in U_{ad}$, $z^0\in D(A)$ and $z^0_i\geq 0$ on $\Omega$, $i=1,2,3,4$, 
the system \eqref{SICA2} has a unique (global) strong nonnegative 
solution $z \in W^{1,2}([0,T];H(\Omega))$ such that 
$$ 
z_1,z_2,z_3,z_4 \in L^2( 0, T ; H^2 (\Omega)) 
\cap L^{\infty} (0, T ; H^1 (\Omega))\cap L^\infty(Q).
$$
Additionally, there exists $C>0$, independent of $u$ and of the corresponding solution $z$, 
such that for all $t \in [0,T]$ and all $i\in \{1,2,3,4\}$ one has
$$
\left\| \frac{ \partial z_i}{\partial t}\right\|_{L^2(Q)}
+ \left\| z_i \right\|_{L^2(0,T,H^2(\Omega))}
+ \left\|z_i \right\|_{H^1(\Omega)}
+ \left\|z_i\right\|_{H^\infty(Q)} \leq C.
$$
\end{theorem}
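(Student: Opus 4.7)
The plan is to recast system \eqref{SICA2} as the abstract semilinear Cauchy problem \eqref{eqz} on the Hilbert space $H(\Omega)=(L^2(\Omega))^4$ and then invoke the classical semigroup machinery of \cite{17,18}. The first step is to verify that the diagonal Neumann Laplacian $A$ on the domain $D(A)$ is self-adjoint, dissipative and sectorial, hence generates an analytic $C_0$-semigroup $\{e^{tA}\}_{t\ge 0}$ of contractions on $H(\Omega)$; this is standard since each diffusion coefficient $d_i$ is strictly positive. The second step is to check that the nonlinearity $g=(g_1,g_2,g_3,g_4)$, which is polynomial of degree at most two in $z$ and affine in the control $u\in U_{ad}$, is locally Lipschitz from bounded subsets of $(H^1(\Omega))^4$ into $H(\Omega)$; this follows from the Sobolev embedding $H^1(\Omega)\hookrightarrow L^p(\Omega)$ for every $p<\infty$ available in dimension two, which controls products $z_iz_j$ in $L^2(\Omega)$.

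Once these two hypotheses hold, a unique mild solution exists on a maximal interval $[0,T_{\max})$ by the usual fixed-point argument on the Duhamel formula. Since $z^0\in D(A)$ and $u$ is merely $L^2$ in time while $g$ is smooth in $z$, I would then bootstrap the mild solution to a strong one in $W^{1,2}([0,T_{\max});H(\Omega))$ with $z_i\in L^2(0,T_{\max};H^2(\Omega))\cap L^\infty(0,T_{\max};H^1(\Omega))$ by the standard parabolic regularity theory applied componentwise, using that $e^{tA}$ is analytic. To guarantee nonnegativity on $[0,T_{\max})$ I would verify the quasi-positivity of $g$: at any point of the boundary of the positive orthant where $z_i=0$ and the remaining components are nonnegative, one checks $g_i\ge 0$ directly (for $i=1$ this uses $\Lambda\ge 0$ and $uz_2\ge 0$; the other three are immediate). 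Invariance of the positive cone under the Neumann reaction-diffusion flow then follows from a truncation argument combined with the weak maximum principle.

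Globality, i.e.\ $T_{\max}=T$, requires a priori $L^\infty$ bounds. I would sum the four equations to cancel the cross-reaction term $\beta(I+\eta_C C+\eta_A A)S$ and the control term $uI$, producing, for $N=S+I+C+A$, a scalar inequality of the form $\partial_t N\le \Delta(d_S S+d_I I+d_C C+d_A A)+\Lambda-\mu_0 N$ with $\mu_0=\min\{\mu,\xi_1-\gamma,\xi_2-\omega,\xi_3-\phi-\rho\}$ positive under the standing assumptions on the coefficients. Applying the maximum principle with homogeneous Neumann data yields $\|N(t,\cdot)\|_{L^\infty(\Omega)}\le \max(\|N(0,\cdot)\|_{L^\infty(\Omega)},\Lambda/\mu_0)$, which, together with nonnegativity, bounds each $z_i$ uniformly in $L^\infty(Q)$ and hence $g(t,z)$ in $L^\infty(Q)$. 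Plugging this back into the variation-of-constants formula and into standard $L^2$-energy estimates (testing the $i$-th equation first by $z_i$ and then by $-\Delta z_i$, integrating over $\Omega$, and using Gronwall) produces the announced uniform bound with a constant $C$ depending only on $\Omega$, $T$, the model parameters and $z^0$, and independent of $u$ since $|u|\le 1$ almost everywhere on $Q$.

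The main obstacle I expect is closing the global $L^\infty$ bound for the coupled system while simultaneously preserving nonnegativity, because the quadratic incidence term $\beta(I+\eta_C C+\eta_A A)S$ is not sign-preserving in $S$ if treated naively. The crucial observation is that this term is exactly the one that cancels when summing the $S$ and $I$ equations; this turns the total-population identity into a dissipative scalar inequality and makes the whole scheme work, whereas without this cancellation one would only be able to assert local existence.
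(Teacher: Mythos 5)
Your overall architecture (semigroup generation, locally Lipschitz nonlinearity via the two-dimensional Sobolev embedding, mild-to-strong bootstrapping, quasi-positivity for invariance of the positive cone) is sound, and in places more careful than the paper, which simply asserts that $g$ ``becomes Lipschitz continuous in $z$ uniformly with respect to $t$'' and then derives the $L^\infty(Q)$ bound from a comparison function built with $k=\max\{\|g_i\|_{L^{\infty}(Q)},\|z_i^0\|_{L^{\infty}(\Omega)}\}$, a choice that already presupposes the boundedness being proved. However, your globality step contains a genuine gap. Summing the four equations does cancel the incidence and control terms, but the resulting identity reads
\begin{equation*}
\partial_t N=\Delta\bigl(d_S S+d_I I+d_C C+d_A A\bigr)+\Lambda-\mu N,
\end{equation*}
and since $d_S\neq d_I$ in general (indeed $d_S=0.9$ while $d_I=0.1$ in the paper's own Table~\ref{tab:2}), the right-hand side is \emph{not} of the form $d\Delta N+\Lambda-\mu N$. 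The scalar parabolic maximum principle therefore does not apply to $N$, and the claimed bound $\|N(t,\cdot)\|_{L^\infty(\Omega)}\le\max\bigl(\|N(0,\cdot)\|_{L^\infty(\Omega)},\Lambda/\mu_0\bigr)$ does not follow. What does follow, by integrating over $\Omega$ and using the Neumann condition, is only the $L^1$ estimate $\frac{d}{dt}\int_\Omega N\,dx\le\Lambda|\Omega|-\mu_0\int_\Omega N\,dx$. Upgrading $L^1$ mass control to uniform $L^\infty$ bounds for reaction--diffusion systems with unequal diffusivities and quadratic nonlinearities is precisely the hard part of such problems and cannot be dispatched by a single application of the maximum principle.

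The paper circumvents this by invoking the invariant-region theorem \cite[Theorem~14.14]{Smoller}: because the candidate region $\Sigma=\{0\le z_i\le C\}$ is a product of intervals, that theorem tolerates distinct diffusion coefficients, and positive invariance reduces to checking the sign of $\nabla G\cdot g$ on each face. (The paper itself verifies only the lower faces $z_i=0$, so its treatment of the upper bound is also incomplete, but the invariant-rectangle route is the right one for unequal diffusivities.) To repair your argument you should either replace the maximum-principle step on $N$ by such an invariant-rectangle argument, verifying the outward-flux condition on the faces $z_i=C$ as well as on $z_i=0$, or keep the $L^1$ mass bound and invoke a duality-type estimate to reach $L^\infty$. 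The remainder of your proof (energy estimates obtained by testing against $z_i$ and $-\Delta z_i$, Gronwall, uniformity in $u$ from $0\le u\le 1$) goes through once the $L^\infty$ bound is secured, and your quasi-positivity check for nonnegativity is essentially the same computation the paper performs on the lower faces of $\Sigma$.
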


\begin{proof}
Because the Laplacian operator $\Delta$ is dissipating, self-adjoint,
and generates a $C_0-$ semigroup of contractions on $ H(\Omega)$, 
it is clear that function $g = (g_1 , g_2 , g_3,g_4 )$ becomes Lipschitz continuous 
in $z = (z_1 , z_2 , z_3,z_4 )$ uniformly with respect
to $t \in [0, T]$. Therefore, the problem admits a unique strong solution $z$.
Let us now show that for all $i\in \{1,2,3,4\}$,\;\;$z_i\in L_{\infty}(Q)$.
Indeed, set $k=\max\left\{\|g_i\|_{L_{\infty}(Q}),\|z_i^0\|_{L_{\infty}(\Omega)}\;
:\;i\in \{1,2,3,4\} \right\}$ and let 
$$
U_i (t,x)=z_i (t,x)-kt-\|z_i^0\|_{L^{\infty}(\Omega)}.
$$
Then, 
\begin{equation*}
\begin{cases}
\dfrac{\partial U_i (t,x)}{\partial t}
=d_i\Delta U_i (t,x)+g_i(t,z (t,x))-k,\;\;\;t\in[0,T],\\
U_i(0,x,y)=z_i^0-\|z_i^0\|_{L^{\infty}(\Omega)}.
\end{cases}
\end{equation*}
Let $i\in \{1,2,3,4\}$. There exists an infinitesimal semigroup $\Gamma(t)$ 
associated to the operator $d_i\Delta$ such that
$$
U_i(t,x)=\Gamma(t)\left(z_i^0-\|z_i^0\|_{L^{\infty}(\Omega)}\right)
+\int_{0}^{t}\Gamma(t-s)\left(g_i(z(s))-k\right)ds.
$$
We deduce that $U_i (t,x)\leq 0$ and so $z_i\leq kt+\|z_i^0\|_{L^{\infty}(\Omega)}$.

Consider $V_i (t,x)=z_i (t,x)+kt+\|z_i^0\|_{L^{\infty}(\Omega)}$. Upon differentiation, 
we get
\begin{equation*}
\begin{cases}
\dfrac{\partial V_i (t,x)}{\partial t}
=d_i\Delta V_i (t,x)+g_i(t,z (t,x))+k,\;\;\;t\in[0,T],\\
V_i(0,x,y)=z_i^0+\|z_i^0\|_{L^{\infty}(\Omega)}.
\end{cases}
\end{equation*}
The strong solution of the above equation is
$$
V_i (t,x)=\Gamma(t)\left(z_i^0+\|z_i^0\|_{L^{\infty}(\Omega)}\right)
+\int_{0}^{t}\Gamma(t-s)\left(g_i(z(s))+k\right)ds.
$$
Then, $V_i (t,x)\geq 0$ and so $z_i\geq -kt-\|z_i^0\|_{L^{\infty}(\Omega)}$. 
Consequently, $|z_i(t,x,)|\leq kt+\|z_i^0\|_{L^{\infty}(\Omega)}$, 
which implies that $z_i\in L_{\infty}(Q)$.

Now, we proceed by proving that $z_i\in L_{\infty}\left(0,T;H^1(\Omega)\right)$
for all $i\in \{1,2,3,4\}$. Indeed, let $i\in \{1,2,3,4\}$. From equality
$$
\dfrac{\partial z_i (t,x)}{\partial t}-d_i\Delta z_i (t,x)
=g_i(t,z (t,x)),\;\;\;(t,x)\in[0,T]\times \Omega,
$$
we obtain that
$$
\int_{0}^{t}\int_{\Omega}\left(\dfrac{\partial z_i (t,x)}{\partial t}
-d_i\Delta z_i (t,x)\right)^2dx ds
=\int_{0}^{t}\int_{\Omega}\left(g_i(t,z (t,x))\right)^2dx ds.
$$
From Green's formula, we get
\begin{align*}
\int_{0}^{t}\int_{\Omega}\left(\dfrac{\partial z_i}{\partial t}\right)^2dx ds
+d_i^2\int_{0}^{t}\int_{\Omega}\left(\Delta z_i\right)^2dx ds
&=2d_i\int_{0}^{t}\int_{\Omega}\dfrac{\partial z_i}{\partial t}\times \Delta z_idx ds
+\int_{0}^{t}\int_{\Omega}\left(g_i(t,z_i)\right)^2dx ds\\
&=d_i\int_{\Omega}\left(z_i\right)^2dx -d_i\int_{\Omega}\left(z_i^0\right)^2dx.
\end{align*}
Since $g_i\in L^2(Q)$, $z^0_i\in L^2(Q)$ and $z_i,z_i^0\in L_{\infty}(Q)$, 
we obtain that $z_i\in L_{\infty}\left(0;T;H^1(\Omega))\right)$.

Finally, using the same arguments as for the Field--Noyes equations 
in \cite[Example~4]{Smoller},  we deduce that the solution 
$(z_1,z_2,z_3,z_4)$ is nonnegative. Consider the set 
$$
\Sigma=\left\{(z_1,z_2,z_3,z_4): 0\leq z_i\leq C \;\text{ for }\; i\in \{1,2,3,4\}\right\}
$$ 
and the convex functions $G_i$ defined on $\Sigma$ 
by $G_i(z_1,z_2,z_3,z_4)=-z_i$. One can see that
\begin{equation*}
\begin{split}
\nabla(G_1)\cdot g|_{z_1=0}
=\nabla(-z_1)\cdot g|_{z_1=0}
&=-\Lambda-uz_2\leq 0,\\
\nabla(G_2)\cdot g|_{z_2=0}
=\nabla(-z_2)\cdot g|_{z_2=0}
&=-\beta\eta_Cz_3z_1-\beta\eta_Az_4z_1
-\gamma z_4-\omega z_3\leq 0,\\
\nabla(G_3)\cdot g|_{z_3=0}
=\nabla(-z_3)\cdot g|_{z_3=0}
&=-\phi z_1-v_1z_4\leq 0,\\
\nabla(G_4)\cdot g|_{z_4=0}
=\nabla(-z_4)\cdot g|_{z_4=0}
&=-\rho z_2\leq 0.
\end{split}
\end{equation*}
According to \cite[Theorem 14.14]{Smoller}, 
the region $\Sigma$ is positively invariant
and the result follows.
\end{proof}

% -------------------------------------------------------------

\section{Existence of an optimal control}
\label{sec:5}

To motivate the interest on optimal control,
we begin by showing some numerical simulations 
of our spatiotemporal SICA model \eqref{SICA2}.
For details on the simulation method, 
tool and used code, see
Appendix~\ref{sec:appendix:code}.

We have considered the values for the parameters 
as given in Table~\ref{tab:2}, 
which were borrowed from \cite{10}.
% --------------------------------------------
\begin{table}[ht!]
\caption{Parameters values and units for the SICA model \eqref{SICA2}.}
\label{tab:2}
\begin{center}
\begin{tabular}{lll}
Parameter&Value&Unit\\ \hline
$\mu$&$\dfrac{1}{74.02}$&$day^{-1}$\\
$\Lambda$&$2.19\mu$&$day$\\
$\beta$&$ 0.755$&$(people/km^2)^{-1}.day^{-1}$\\
$\eta_C$&1.5&$day^{-1}$\\
$\eta_A$&0.2&$day^{-1}$\\
$\phi$&1&$day^{-1}$\\
$\rho$&0.1&$day^{-1}$\\
$\gamma$&0.33&$day^{-1}$\\
$\omega$&0.09&$day^{-1}$\\
$d_S$&0.9&$km^2/day$\\
$d_I$&0.1&$km^2/day$\\
$d_C$&0.1&$km^2/day$\\
$d_A$&0.1&$km^2/day$\\
$\xi_1$& $\gamma+\mu$&$day^{-1}$\\
$\xi_2$& $\omega+\mu$&$day^{-1}$\\
$\xi_3$& $\rho+\phi+\mu$&$day^{-1}$\\
\hline
\end{tabular}
\end{center}
\end{table}
% --------------------------------------------
Then, the dynamics without control, that is, with $u \equiv 0$ 
in \eqref{SICA2}, is given in Figure~\ref{Fig01}.
% --------------------------------------------
\begin{center}
\begin{figure}[ht!]	
\begin{tabular}{cc}
\includegraphics[scale=0.5]{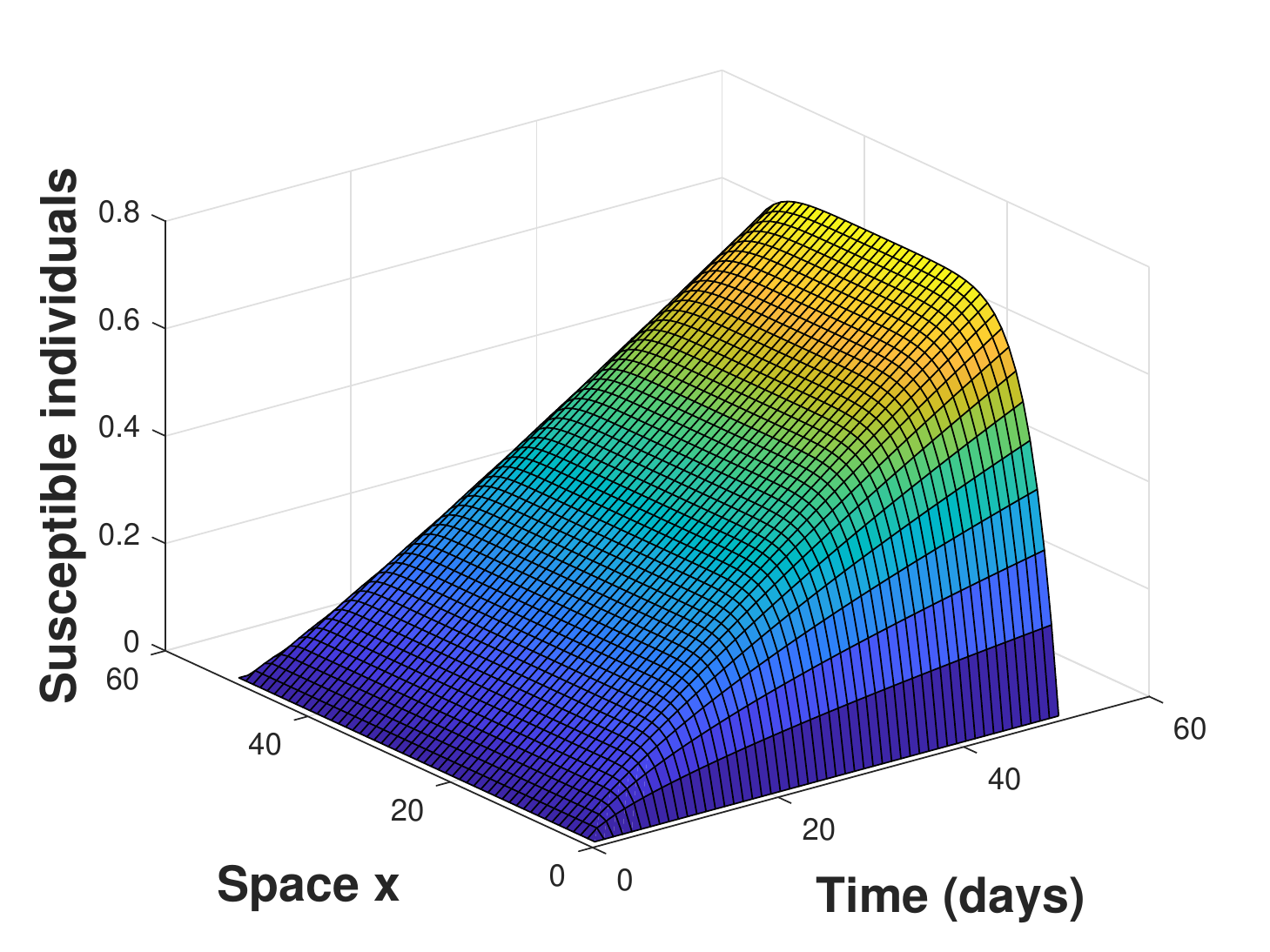}&\includegraphics[scale=0.5]{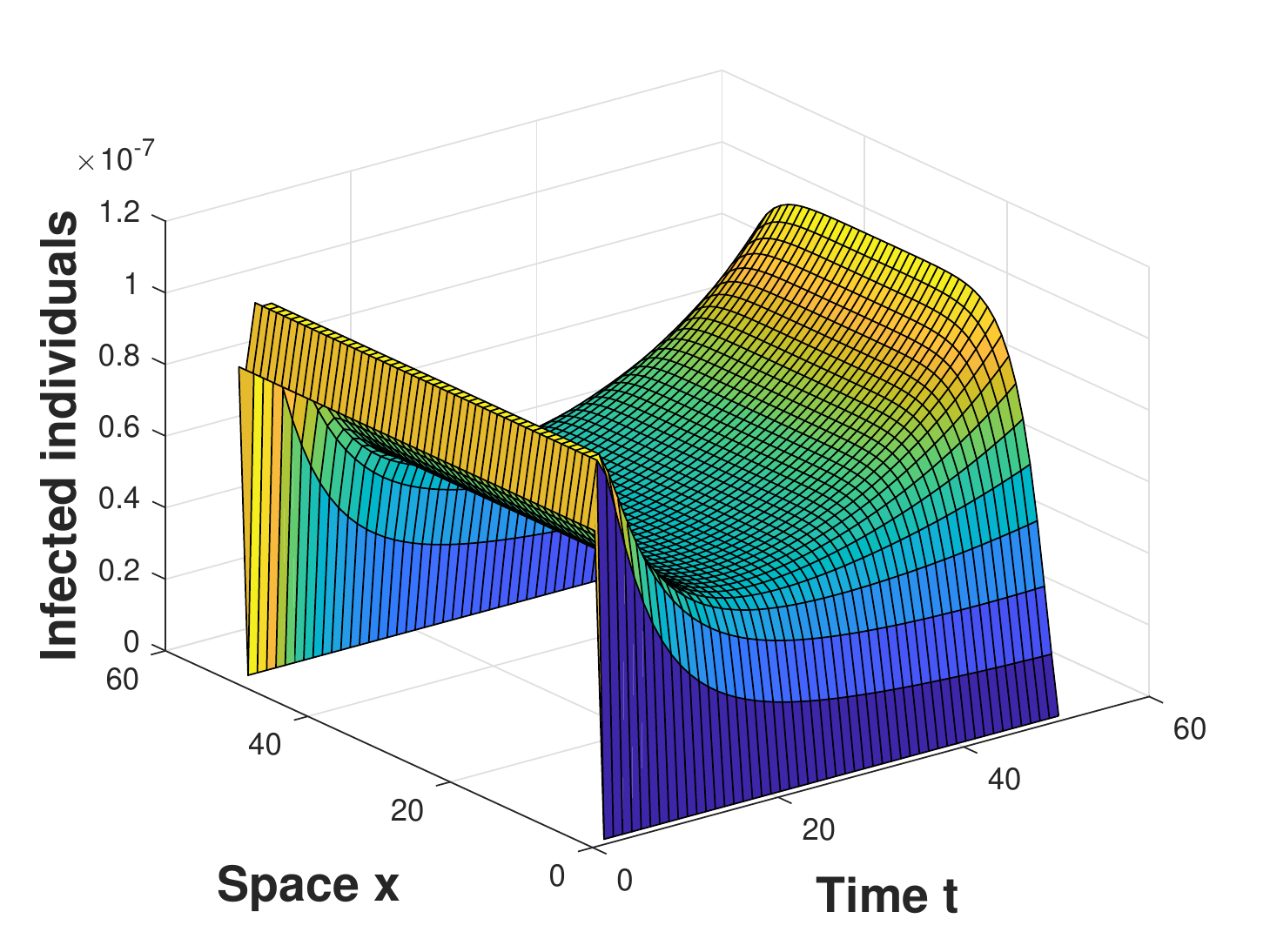}\\
\includegraphics[scale=0.5]{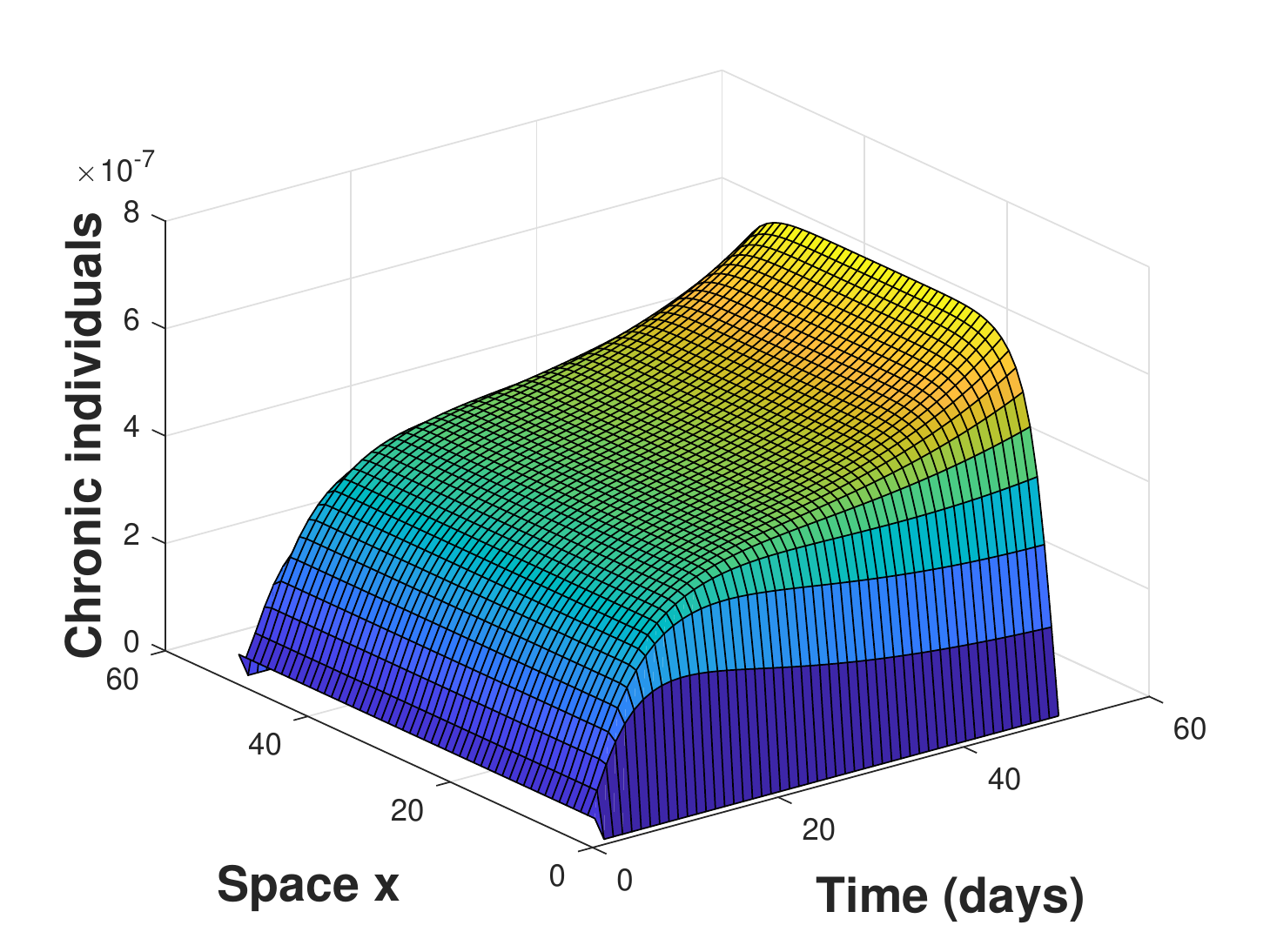}&\includegraphics[scale=0.5]{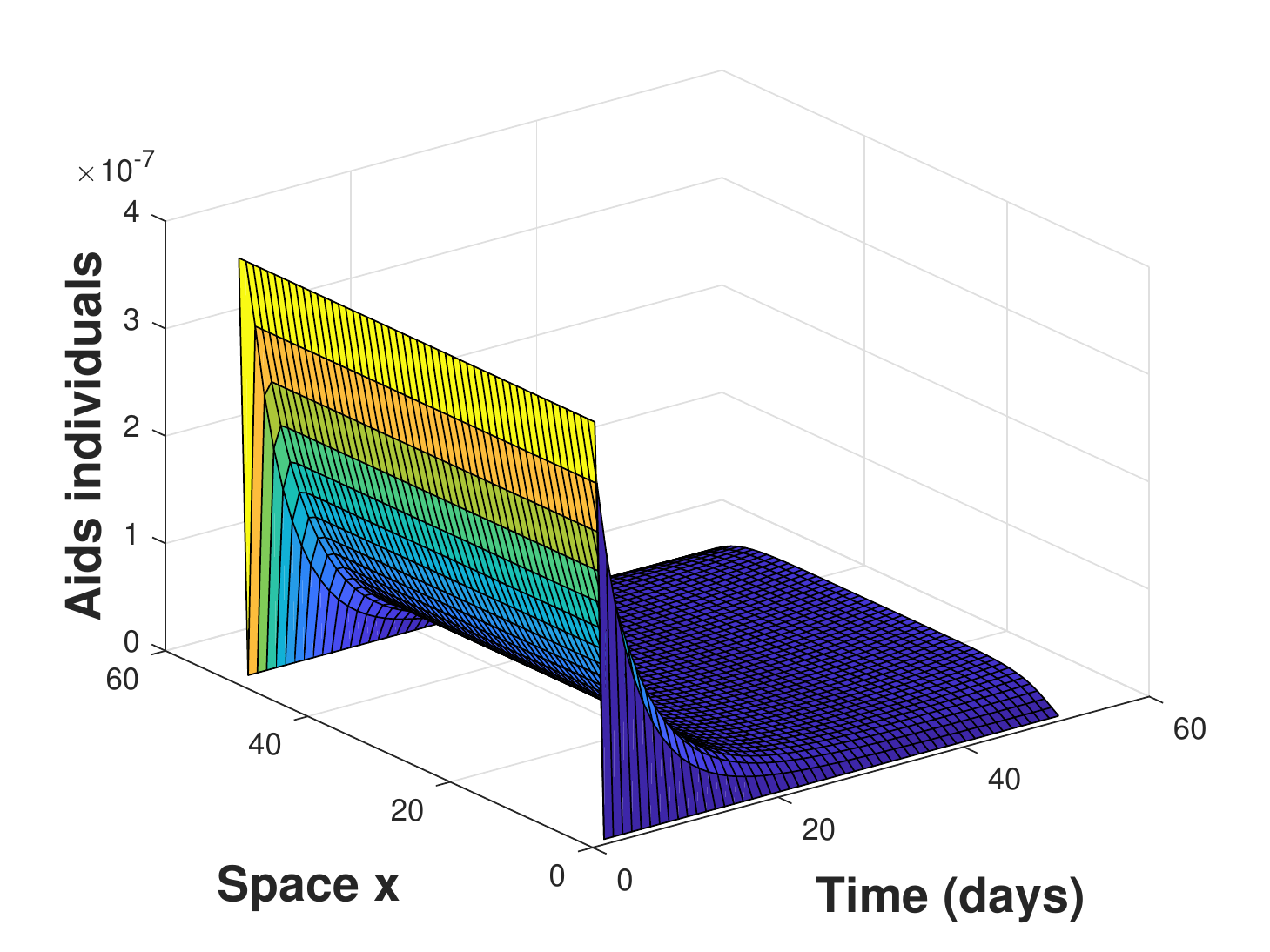}
\end{tabular}
\caption{The behavior of the solution of the system \eqref{SICA2} without control.}
\label{Fig01}
\end{figure}
\end{center}
% --------------------------------------------
In contrast, dynamics in the presence of a control are given 
in Figures~\ref{Fig02} and \ref{Fig03}.
% --------------------------------------------
\begin{center}
\begin{figure}[ht!]	
\begin{tabular}{cc}
\includegraphics[scale=0.5]{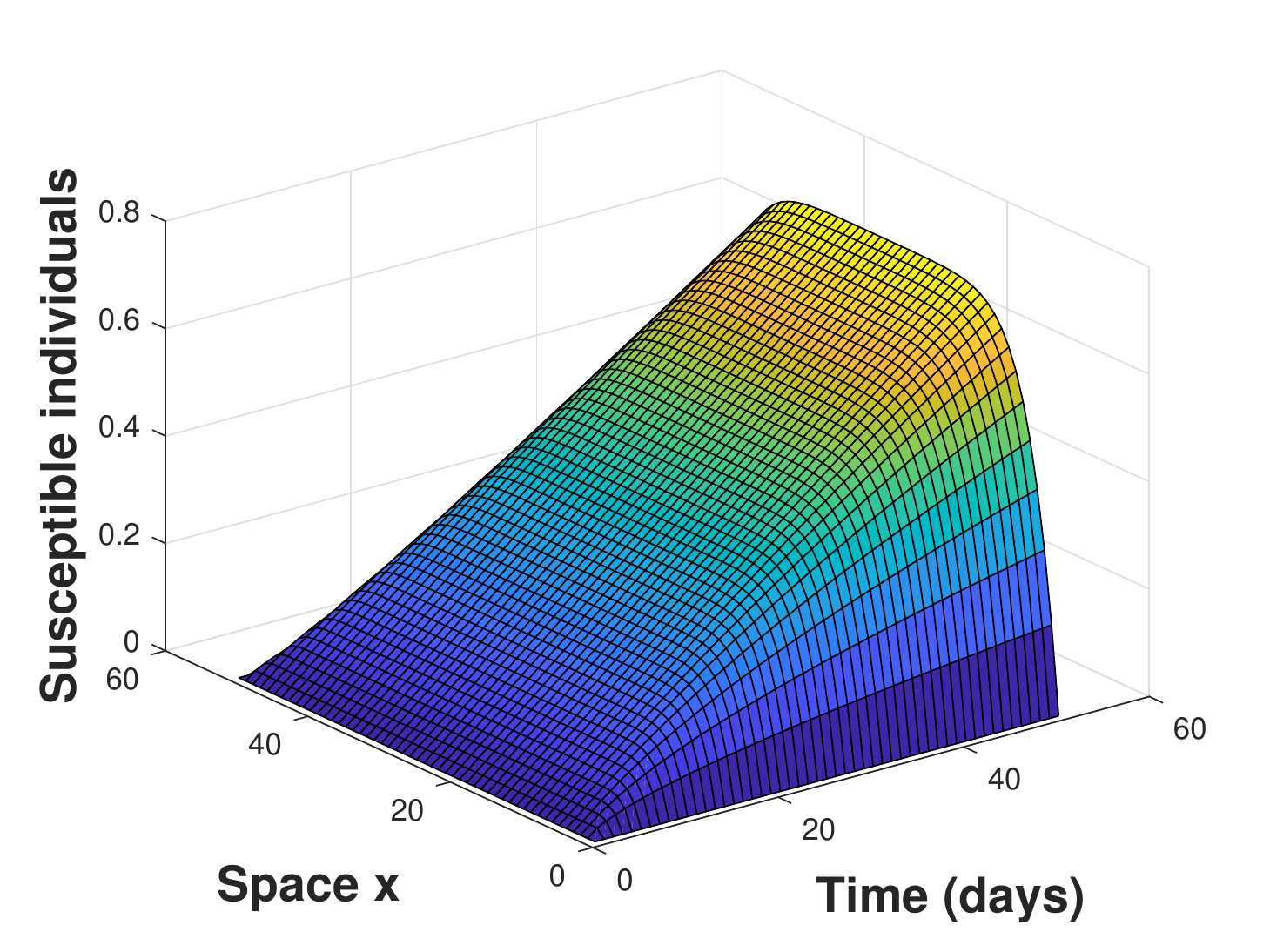}&\includegraphics[scale=0.5]{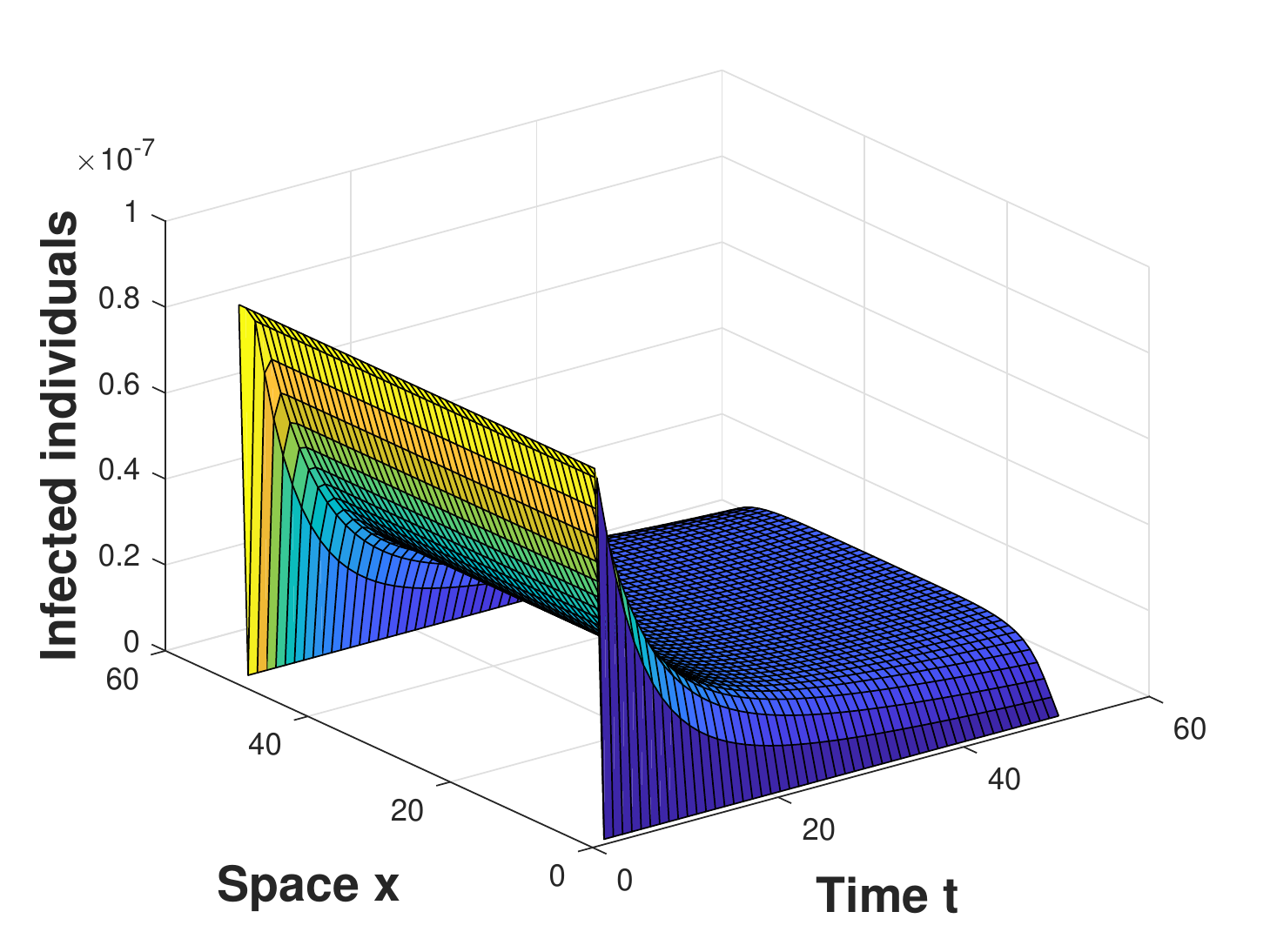}\\
\includegraphics[scale=0.5]{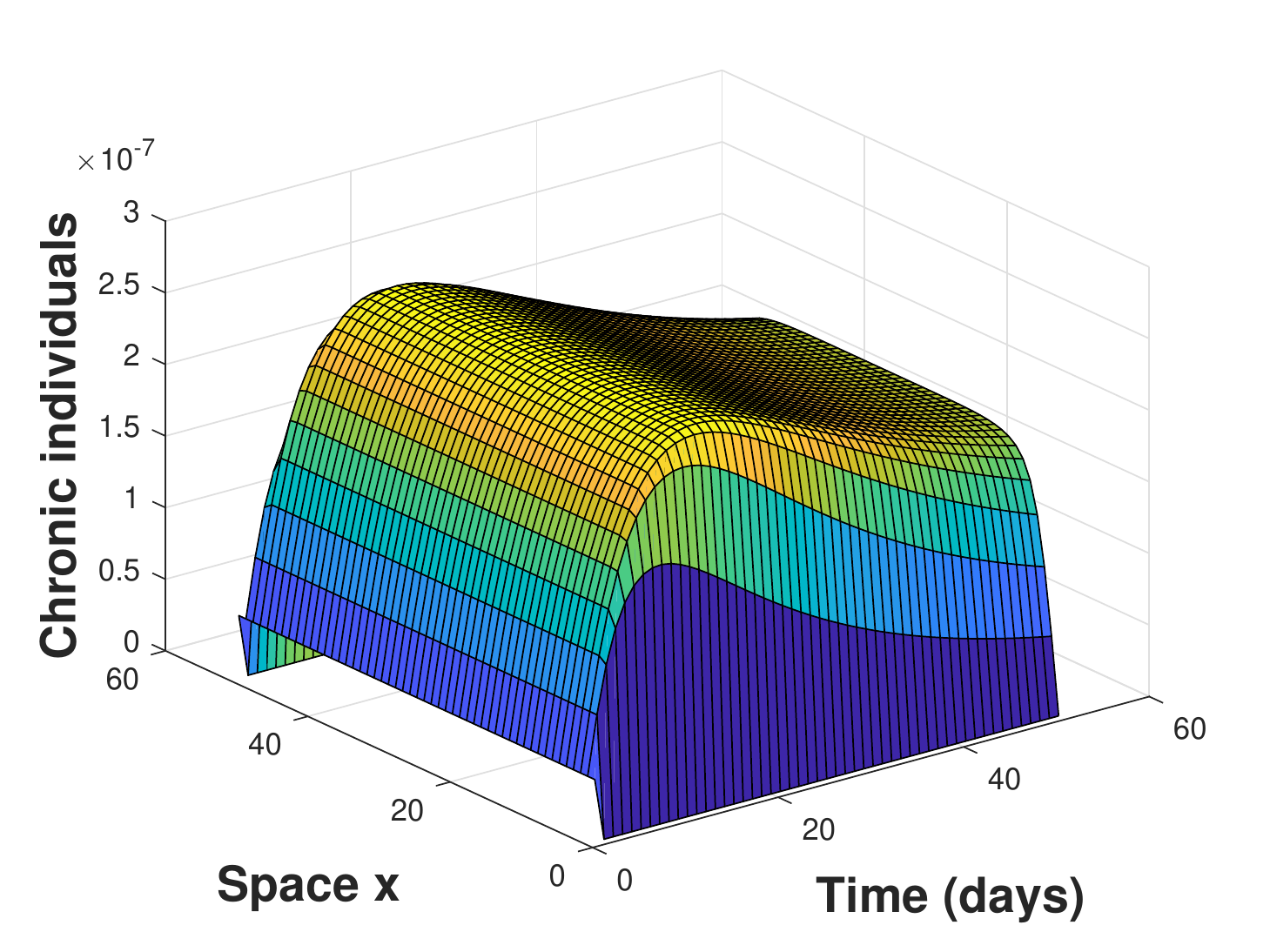}&\includegraphics[scale=0.5]{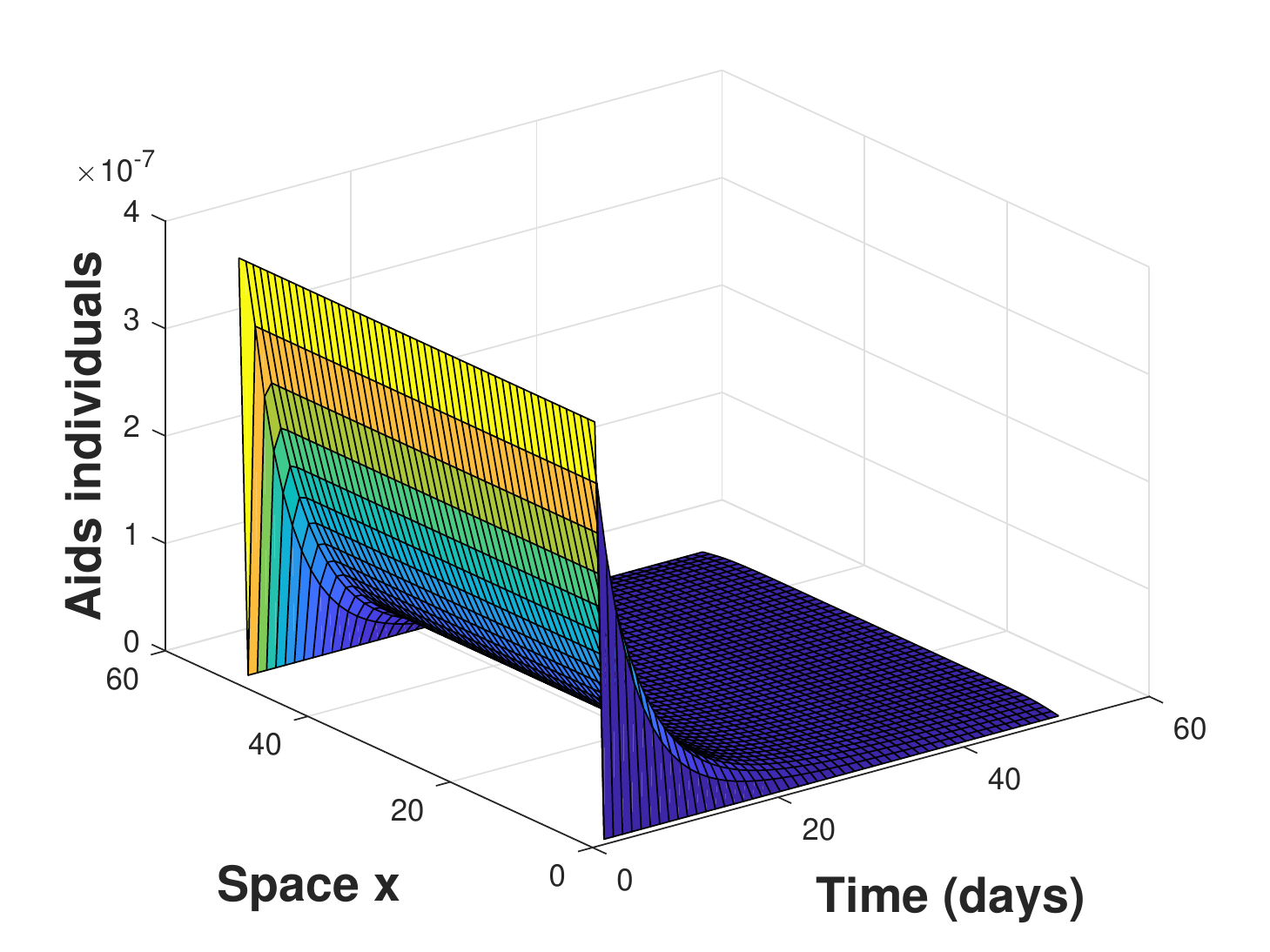}    
\end{tabular}
\caption{The behavior of the solution of the system \eqref{SICA2} with the control $u \equiv 0.5$.}
\label{Fig02}
\end{figure}
% --------------------------------------------
\begin{figure}[ht!]
\begin{tabular}{cc}
\includegraphics[scale=0.5]{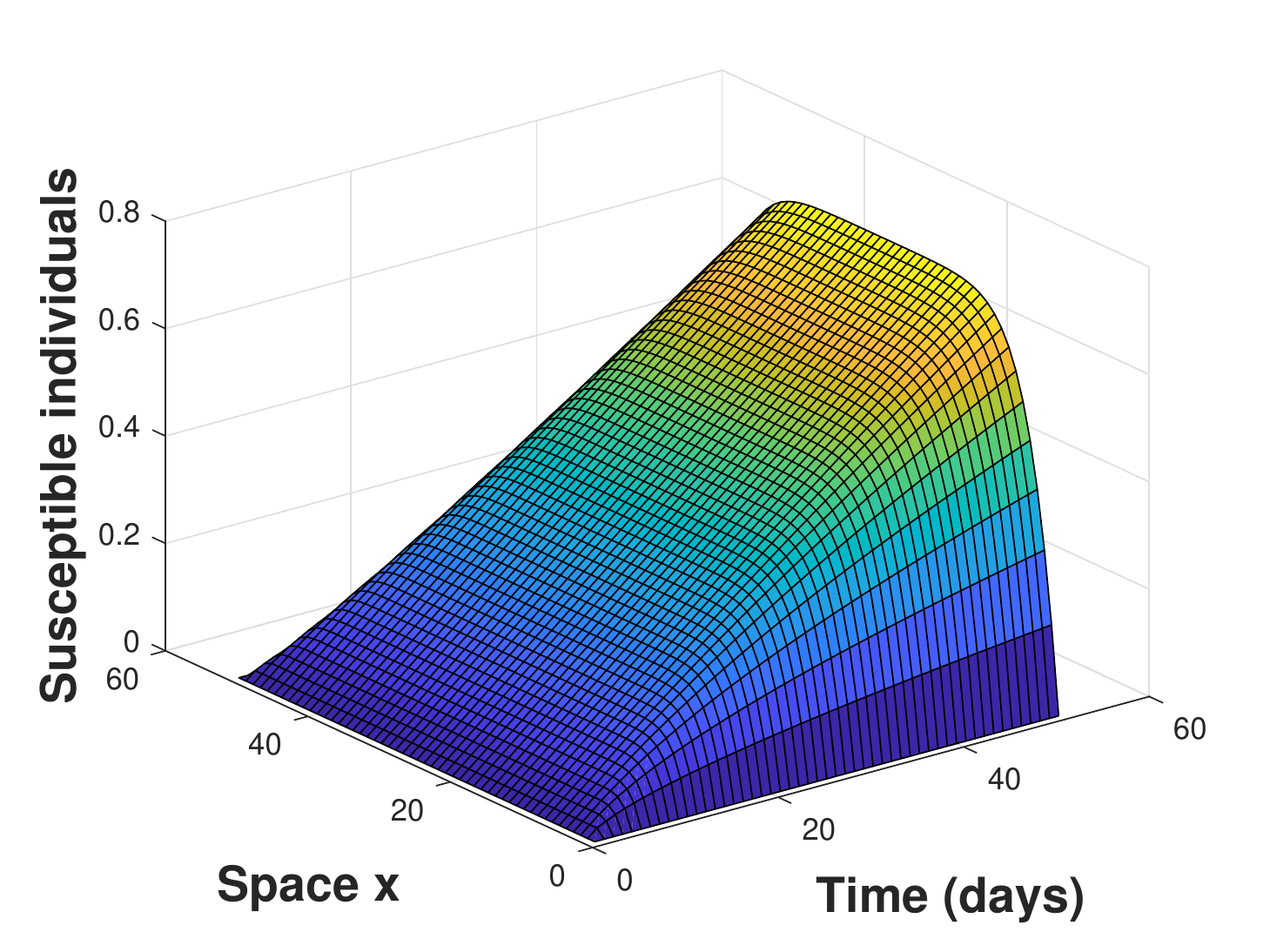}&\includegraphics[scale=0.5]{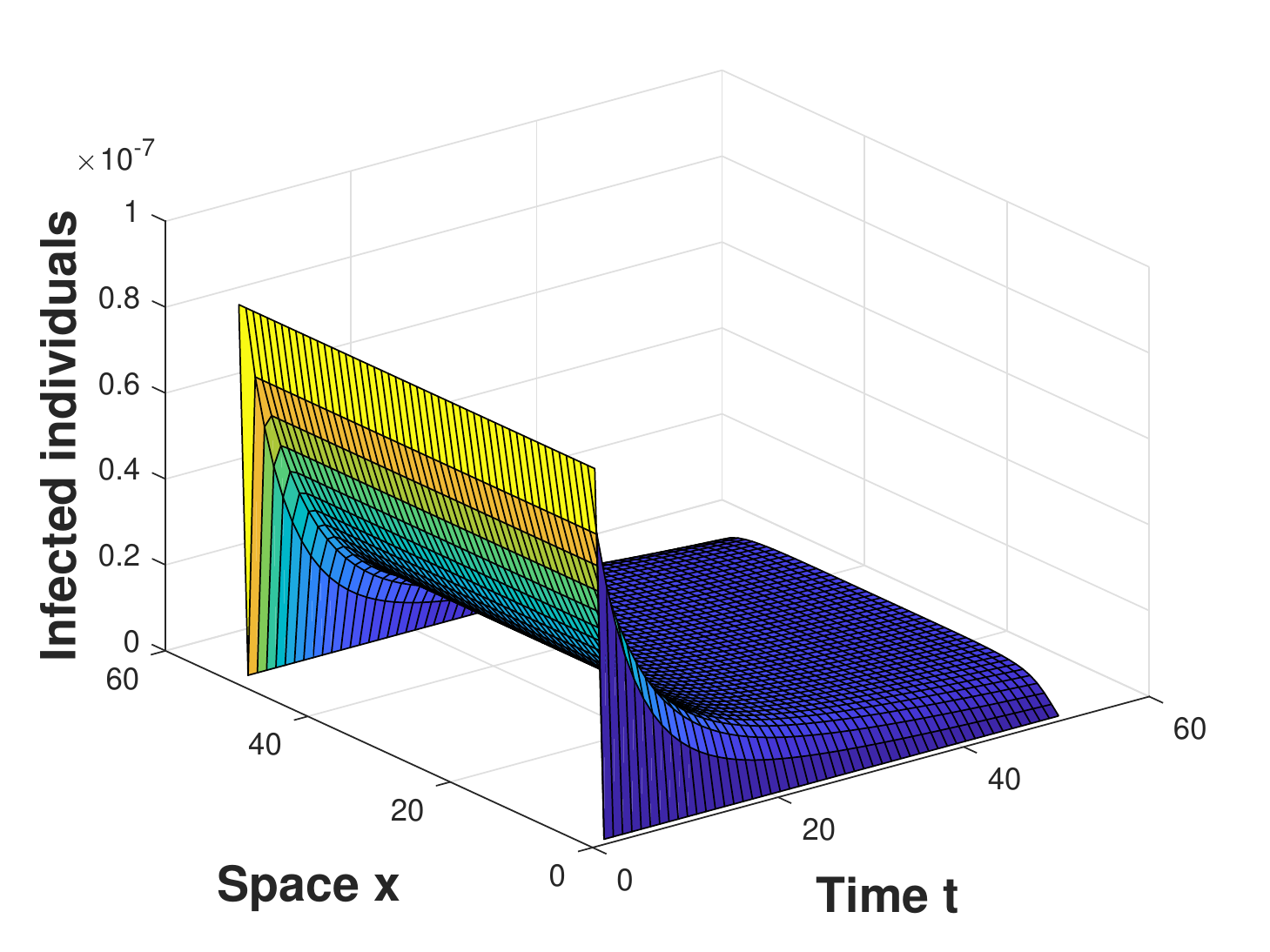}\\
\includegraphics[scale=0.5]{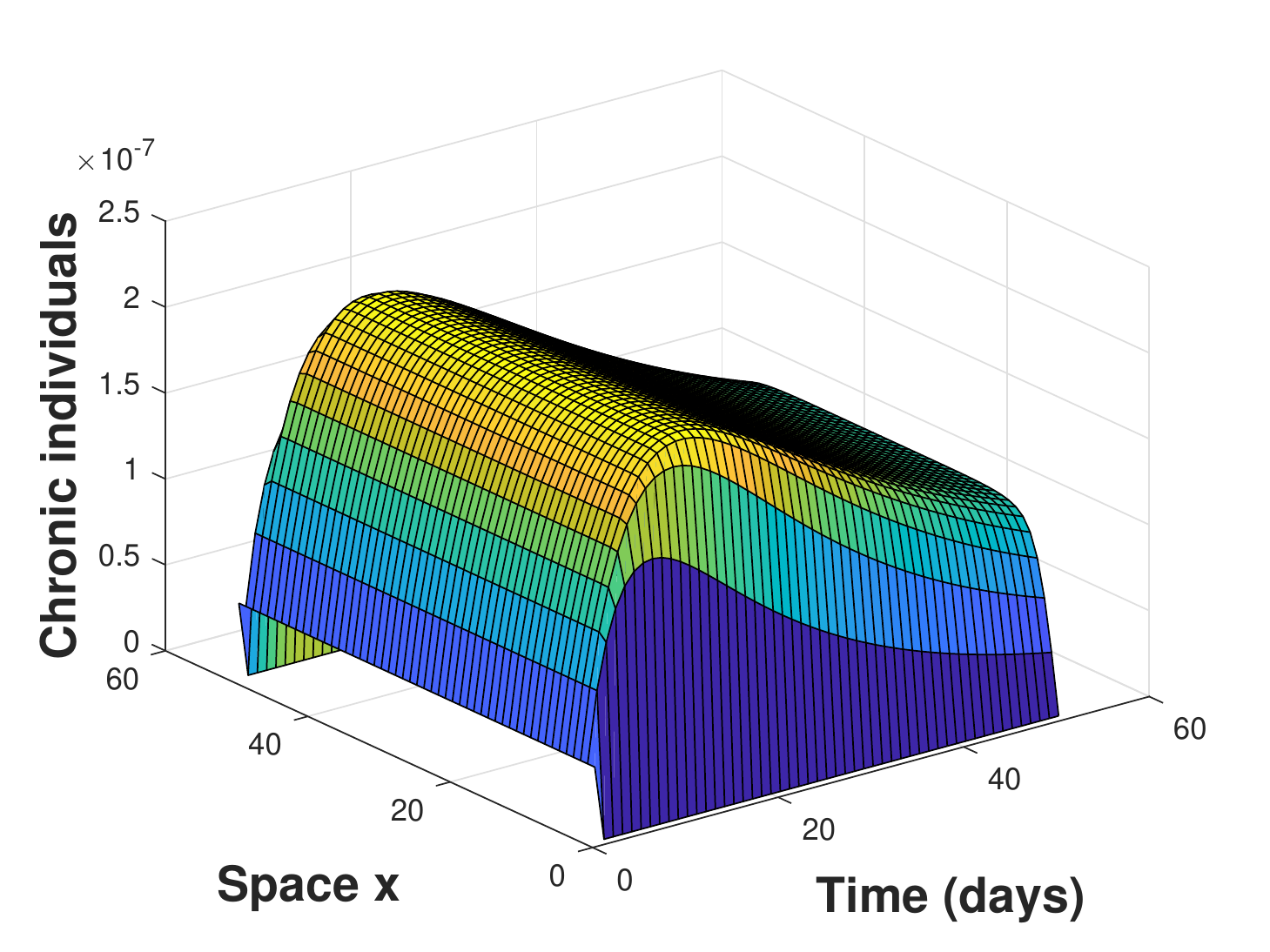}&\includegraphics[scale=0.5]{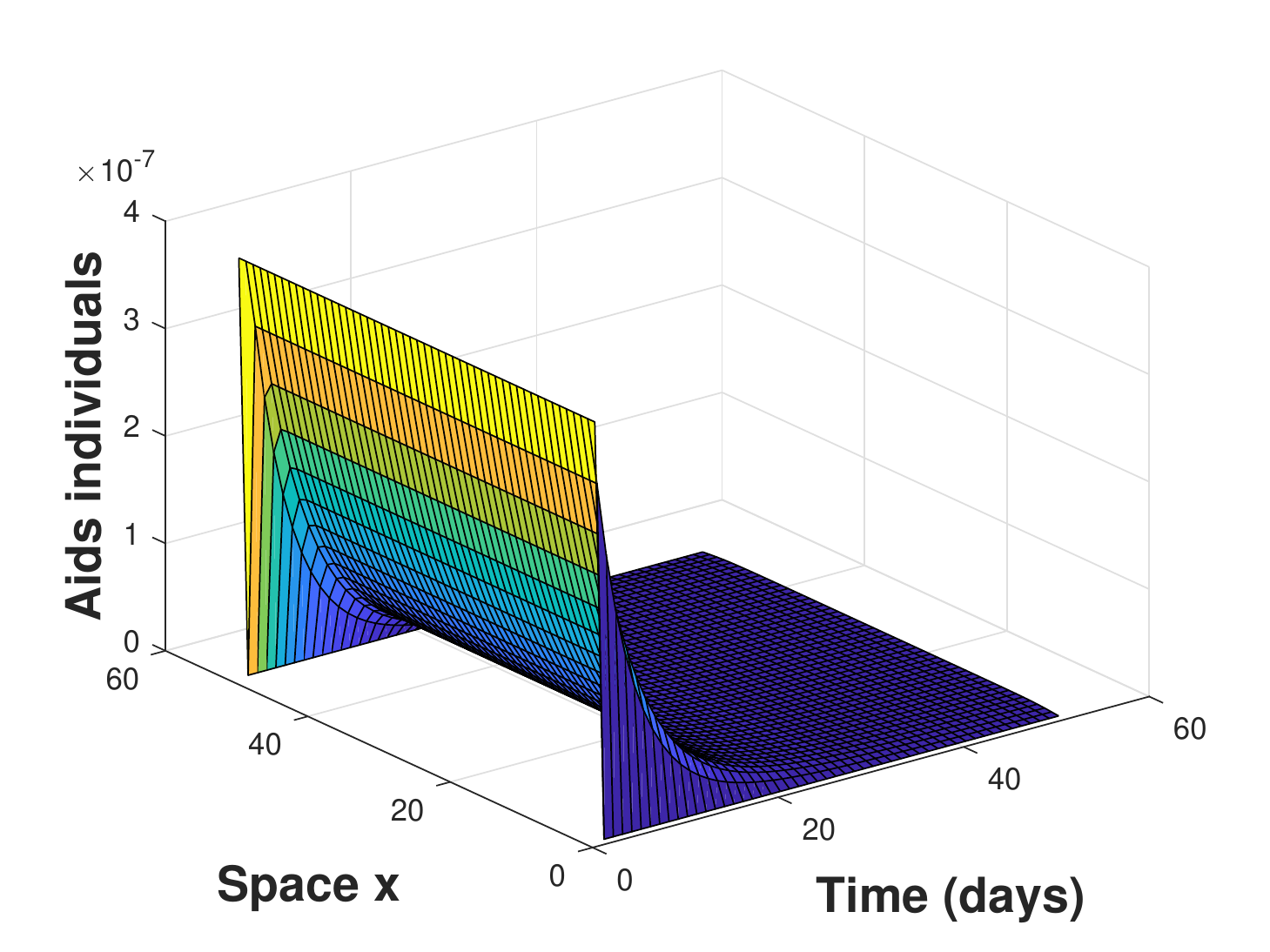}
\end{tabular}
\caption{The behavior of the solution of the system \eqref{SICA2} with the control $u \equiv 0.8$.}
\label{Fig03}
\end{figure}
\end{center}
% --------------------------------------------
We conclude that the evolution of the system
related with the absence of control differs 
totally to those in presence of controls. Indeed,
Figure~\ref{Fig01} shows that in absence of the control
the density of the infected individuals increases while in the presence of a control
(Figures~\ref{Fig02} and \ref{Fig03}) it clearly decreases. 
The question of how to choose the control along time, in an optimal way,
is therefore a natural one. 

Motivated by \cite{11}, our aim is to minimize 
the sum of the density of infected individuals and the cost 
of the treatment program. Mathematically,
the problem we consider here is to minimize the objective functional
\begin{equation}
\label{J:functional}
J(S,I,C,A,u)=\int_{\Omega}\int_0^TaI  (t,x)dtdx
+\frac{b}{2} \mid\mid u (t,x)\mid\mid^2_{L^2([0,T])}
\end{equation}
subject to the control system \eqref{SICA2} and where
the admissible control set $U_{ad}$ is defined as in \eqref{Uad}.

\begin{theorem}
\label{thm:02}
Under the conditions of Theorem~\ref{thm}, our optimal
control problem admits a solution $(z^*,u^*)$.
\end{theorem}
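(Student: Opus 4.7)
The plan is to apply the direct method of the calculus of variations, exploiting the weak compactness of $U_{ad}$, the uniform a~priori bounds of Theorem~\ref{thm}, and the weak lower semicontinuity of $J$. First, since $z^0_i \geq 0$ implies $I \geq 0$ by Theorem~\ref{thm} and $b>0$, the functional $J$ is bounded below by $0$. Setting $d := \inf_{u \in U_{ad}} J(z,u) \geq 0$, I would pick a minimizing sequence $(u^n) \subset U_{ad}$ with associated states $z^n = (S^n, I^n, C^n, A^n)$ solving~\eqref{SICA2}, so that $J(z^n, u^n) \to d$.

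Second, by Theorem~\ref{thm} applied uniformly to each $u^n \in U_{ad}$, there exists $C>0$ independent of $n$ such that
$$
\lVert \partial_t z_i^n \rVert_{L^2(Q)} + \lVert z_i^n \rVert_{L^2(0,T;H^2(\Omega))} + \lVert z_i^n \rVert_{L^\infty(0,T;H^1(\Omega))} + \lVert z_i^n \rVert_{L^\infty(Q)} \leq C,
$$
together with $0 \leq u^n \leq 1$ a.e.\ in $Q$. By reflexivity and Banach--Alaoglu, up to a subsequence, $u^n \rightharpoonup u^*$ weakly in $L^2(Q)$, $z_i^n \rightharpoonup z_i^*$ weakly in $L^2(0,T;H^2(\Omega))$ and weakly-$*$ in $L^\infty(0,T;H^1(\Omega)) \cap L^\infty(Q)$, and $\partial_t z_i^n \rightharpoonup \partial_t z_i^*$ weakly in $L^2(Q)$. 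Since $\Omega$ is bounded with $C^{2+\alpha}$ boundary, the embeddings $H^2(\Omega) \hookrightarrow H^1(\Omega) \hookrightarrow L^2(\Omega)$ (first compact by Rellich--Kondrachov) together with the Aubin--Lions lemma give strong convergence $z_i^n \to z_i^*$ in $L^2(0,T;H^1(\Omega))$ and, extracting once more, almost everywhere on $Q$.

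Third, I would pass to the limit in the state system~\eqref{SICA2}. Linear terms are handled directly by weak convergence. For the bilinear products $I^n S^n$, $C^n S^n$, $A^n S^n$ appearing in $g_1$ and $g_2$, combining strong $L^2(Q)$ convergence of one factor with the uniform $L^\infty(Q)$ bound on the other yields convergence in $L^2(Q)$ to the product of the limits, via
$$
\lVert S^n I^n - S^* I^* \rVert_{L^2(Q)} \leq \lVert S^n \rVert_{L^\infty(Q)} \lVert I^n - I^* \rVert_{L^2(Q)} + \lVert I^* \rVert_{L^\infty(Q)} \lVert S^n - S^* \rVert_{L^2(Q)},
$$
and analogously for the other products. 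For the control-state product $u^n I^n$, strong $L^2(Q)$ convergence of $I^n$ combined with weak $L^2(Q)$ convergence of $u^n$ yields $u^n I^n \rightharpoonup u^* I^*$ weakly in $L^2(Q)$ (using $\lVert I^n \rVert_{L^\infty(Q)} \leq C$). This identifies $z^*$ as the strong solution of~\eqref{SICA2} associated with $u^*$.

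Finally, $U_{ad}$ is convex and closed in $L^2(Q)$, hence weakly closed, so $u^* \in U_{ad}$. The functional $J$ is convex (linear in $I$, quadratic in $u$) and strongly continuous, hence weakly lower semicontinuous, which gives
$$
J(z^*, u^*) \leq \liminf_{n \to \infty} J(z^n, u^n) = d,
$$
so that $(z^*, u^*)$ is an optimal pair. The main obstacle is the passage to the limit in the bilinear nonlinearities $\beta(I + \eta_C C + \eta_A A) S$ and in the control-state product $uI$: weak convergence alone is insufficient, and one must invoke the Aubin--Lions compactness upgrade combined with the $L^\infty(Q)$ bound furnished by Theorem~\ref{thm} to close the argument.
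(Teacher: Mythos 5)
Your proposal is correct and follows essentially the same route as the paper: a minimizing sequence, the uniform a priori bounds of Theorem~\ref{thm}, a compactness upgrade to strong $L^2(Q)$ convergence of the states so as to pass to the limit in the bilinear terms and in $u^nI^n$, weak closedness of $U_{ad}$, and weak lower semicontinuity of $J$. The only difference is cosmetic --- you invoke Aubin--Lions where the paper uses an equicontinuity/Ascoli--Arzel\`a argument to get the same strong convergence --- and your explicit handling of the weak-times-strong limit for $u^nI^n$ is, if anything, more careful than the paper's.
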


\begin{proof}
The proof is divided into three steps.\\

\noindent Step~1: Existence of a minimizing sequence $(z^n,u_n)$.
The infimum of the objective function on the set of admissible 
controls is ensured by the positivity of $J$.
Assume that $J^*=\inf_{u\in U_{ad}}J(z,u)$.
Let $\{u_n\}\subset U_{ad}$  be a minimizing sequence such that 
$\lim\limits_{n\rightarrow +\infty}J(z^n,u_n)=J^*$,
where $(z^n_1,z^n_2,z^n_3,z^n_4)$ is the solution of the system corresponding
to the control $u_n$. Subsequently,
\begin{equation}
\label{systn}
\begin{cases}
\frac{\partial z_1^n}{\partial t} 
= d_S\Delta z_1^n+ \Lambda -\beta \left( z_2^n +\eta _{C}\cdot z_3^n 
+\eta _{A}\cdot z_4^n \right) z_1^n+u (t,x) \cdot z_2^n -\mu  z_1^n,\\ 
\frac{\partial z_2^n}{\partial t} 
=d_I\Delta z_2^n+ \beta \left( z_2^n +\eta _{C}z
\cdot_3^n +\eta _{A}\cdot z_4^n \right) z_1^n -\xi _{3} z_2^n 
+\gamma  z_4^n +\omega  z_3^n-u (t,x)\cdot z_2^n,\\ 
\frac{\partial z_3^n}{\partial t} 
=d_C\Delta  z_3^n + \phi  z_2^n -\xi _{2} z_3^n,\\ 
\frac{\partial z_4^n}{\partial t} 
=d_A\Delta  z_4^n + \rho z_2^n -\xi  _{1} z_4^n, 
\end{cases}
\end{equation}
where $\dfrac{\partial z^n_1}{\partial \eta}=\dfrac{\partial z^n_2}{\partial \eta}
=\dfrac{\partial z^n_3}{\partial \eta}=\dfrac{\partial z^n_4}{\partial \eta}=0$ on $Q$.\\

\noindent Step~2: Convergence of the minimizing sequence $(z^n,u_n)$ to $(z^*, u^*)$.
Let $i\in\{1,2,3,4\}$. Note that $z_i^n (t,x)$ is compact in $L^2(\Omega)$ 
from the fact that $H^1(\Omega)$ is compactly embedded in $L^2(\Omega)$. 
In order to apply the Ascoli--Arzela theorem, we need to demonstrate that 
$\{z_i^n (t,x),n\geq 1\}$ is equicontinuous in $C([0,T],L^2(\Omega))$.
This is indeed true: because of the boundedness of 
$\dfrac{\partial z_i^n}{\partial t}$ in $L^2(Q)$, 
there exists a positive constant $k$ such that 
$$
\left|\int_\Omega (z_i^n)^2  (t,x)dx
- \int_\Omega (z_i^n)^2(s,x)dx \right| 
\leq k \mid t-s \mid 
$$
for all $s,t\in [0,T]$. Hence, $z_i^n$ is compact in $C([0,T],L^2(\Omega))$
and there exists a subsequence of $\{z_i^n\}$, denoted also $\{z_i^n\}$, 
converging uniformly to $z^*_i$ in $L^2(\Omega)$ with respect to $t$.
Since $\Delta z^n_i$ is bounded in $L^2(Q)$, there exists a sub-sequence, 
denoted again $\Delta z^n_i$, converging weakly in $L^2(Q)$. 
For every distribution $\varphi$,
\begin{align*}
\int_Q\varphi \Delta z^n_i 
= \int_Qz^n_i \Delta \varphi \rightarrow \int_Qz^*_i \Delta \varphi
= \int_Q\varphi \Delta z^*_i.
\end{align*}
Thus, $\Delta z^n_i \rightharpoonup \Delta z^*_i$ in $L^2(Q)$.
By the same argument, $\dfrac{\partial z_i^n}{\partial t} 
\rightharpoonup \frac{\partial z^*_i}{\partial t}$ and 
$z_i^n \rightharpoonup z_i^*$ in $L^2(0,T;H^2(\Omega))$ and 
$ z_i^n \rightharpoonup z_i^*$ in $L^\infty(0,T;H^1(\Omega))$.
From $z_1^nz_2^n=(z_1^n-z_1^*)z_2^n+z_1^n(z_2^n-z_2^*)$, we deduce 
that $z_1^nz_2^n \rightharpoonup z_1^*z_2^*$ in $L^2(Q)$. Therefore,  
$u_n \rightharpoonup u^*$ in $L^2(Q)$.
Since $U_{ad}$ is closed, then $u^* \in U_{ad}$.\\

\noindent Step~3: We conclude that $u^n z_2^n \rightharpoonup u^*z_2^*$ in $L^2(Q)$.
Letting $n\rightarrow \infty$ in (\ref{systn}), we obtain that $z^*$ 
is a solution of equation (\ref{eqz}) corresponding to $u^*$. Therefore,
\begin{align*}
J(z^*,u^*)=& \int_0^Taz^*_2 (t,x)dtdx +\frac{b}{2} \mid\mid u^* (t,x)\mid\mid^2_{L^2(Q])}\\
\leq &\liminf \int_0^Taz^n_2 (t,x)dtdx +\frac{b}{2} \mid\mid u^n (t,x)\mid\mid^2_{L^2(Q)}\\
\leq& \lim \int_0^Taz^n_2 (t,x)dtdx +\frac{b}{2} \mid\mid u^n (t,x)\mid\mid^2_{L^2(Q)} = J^*.
\end{align*}
This shows that $J$ attains its minimum at $(z^*,u^*)$.
\end{proof}

% ----------------------------------------

\section{Necessary optimality conditions}
\label{sec:6}

Now we characterize the optimality that we proved to exist in Section~\ref{sec:5}. 
Let $(z^*,u^*)$ be an optimal pair and $u^\epsilon = u^*+\epsilon u$, $\epsilon>0$, 
be a control function such that $u \in L^2(Q)$ and $u \in U_{ad}$. We denote by 
$z^\epsilon=(z_1^\epsilon,z_2^\epsilon,z_3^\epsilon,z_4^\epsilon)$ 
and $z^*=(z_1^*,z_2^*,z_3^*,z_4^*)$ the corresponding trajectories
associated with the controls $u^\epsilon $ and $u^*$, respectively.

In the following result we decompose the right-hand side of our 
control system into three quantities: $M$, related to the Laplacian part; 
$R$, linked to the control part; and $F$ for the remaining terms.

\begin{theorem}
\label{thm:3}
For all $i\in\{1,2,3,4\}$, the mapping $u \longmapsto z_i(u)$ defined from $U_{ad}$ 
to $W^{1,2}([0,T],H(\Omega))$ is Gateaux differentiable with respect to $u^*$.
For all $u \in U_{ad}$,  set $z_i'(u^*)u=Z_i$. 
Then $Z=(Z_1,Z_2,Z_3,Z_4)$ is the unique solution of the problem
$$
\dfrac{\partial Z}{\partial t}= M Z+FZ+u R
\quad \text{subject to } \ Z(0,x)=0,
$$
where 
$$
F=\left(
\begin{matrix}
-\beta \left( z_2^* +\eta_{C}\cdot z_3^* 
+\eta _{A}\cdot z_4^* \right) -\mu  &0&0&0\\
\beta \left( z_2^* +\eta_{C}\cdot z_3^* 
+\eta _{A}\cdot z_4^* \right)& -\xi_{3} & \omega &\gamma\\
0&\phi & -\xi _{2}&0\\
0&\rho&0&-\xi  _{1}  
\end{matrix}\right) 
\quad \text{and} \quad
R=\left(
\begin{matrix}
-z_2^*\\
z_2^*\\
0\\
0
\end{matrix}\right).
$$
\end{theorem}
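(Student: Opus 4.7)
The plan is to argue by a difference-quotient / linearisation argument at $u^*$. Fix a direction $u$ such that $u^\epsilon := u^* + \epsilon u \in U_{ad}$ for all sufficiently small $\epsilon>0$, and denote by $z^\epsilon, z^*$ the corresponding strong solutions produced by Theorem~\ref{thm}. Set $Z_i^\epsilon := (z_i^\epsilon - z_i^*)/\epsilon$ and subtract the two state systems component-wise. The linear terms pass through immediately; the bilinear interaction $\beta(z_2+\eta_C z_3+\eta_A z_4)z_1$ and the control-state product $u z_2$ are handled by the elementary identity $ab-cd = a(b-d)+(a-c)d$. Splitting both factors in this way isolates an expression of the form $M Z^\epsilon + F Z^\epsilon + u R$, with coefficients evaluated at $z^*$, plus a remainder of the shape $\epsilon \cdot (\text{uniformly bounded in } L^\infty(Q))$.

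Next, I establish uniform estimates on $Z^\epsilon$ in the same function spaces provided by Theorem~\ref{thm}. Testing the PDE for $Z_i^\epsilon$ against $Z_i^\epsilon$ and against $-\Delta Z_i^\epsilon$, applying Green's formula as in the proof of Theorem~\ref{thm}, and controlling the coefficients of the linearised operator by the $L^\infty(Q)$ bound on $z^*$ supplied by that theorem, I close the estimate with Gr\"onwall's inequality. This yields a bound on $Z^\epsilon$ in $W^{1,2}([0,T];H(\Omega))$ with each component in $L^2(0,T;H^2(\Omega))\cap L^\infty(0,T;H^1(\Omega))$, uniform in $\epsilon\in(0,\epsilon_0]$. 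Applying the Aubin--Lions compactness argument already exploited in Step~2 of the proof of Theorem~\ref{thm:02}, I extract a subsequence converging to some $Z$ strongly in $C([0,T];L^2(\Omega))$ and weakly in the higher-regularity spaces. Passing to the limit $\epsilon\to 0^+$ in every term of the equation for $Z^\epsilon$ (the $O(\epsilon)$ remainders vanish in $L^2(Q)$ thanks to the uniform bounds) gives that $Z$ satisfies $\partial_t Z = M Z + F Z + u R$ with $Z(0,\cdot)=0$.

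Uniqueness of $Z$ follows by linearity: the difference of two solutions satisfies the homogeneous problem $\partial_t W = MW + FW$ with $W(0,\cdot)=0$, and since all entries of $F$ are in $L^\infty(Q)$ by Theorem~\ref{thm}, a standard $L^2$-energy identity combined with Gr\"onwall forces $W\equiv 0$. Because the limit $Z$ is uniquely identified, the whole family $Z^\epsilon$ (not just a subsequence) converges to $Z$, which is exactly the statement that $u\mapsto z_i(u)$ is Gateaux differentiable at $u^*$ in the direction $u$ with derivative $Z_i$. Linearity of the direction-to-derivative map is immediate from the structure of the limit system.

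The principal obstacle lies in justifying the limit in the quadratic term $\beta(z_2+\eta_C z_3+\eta_A z_4)z_1$. One must simultaneously show that the remainder $\beta(Z_2^\epsilon+\eta_C Z_3^\epsilon+\eta_A Z_4^\epsilon)(z_1^\epsilon-z_1^*)$ tends to zero in $L^2(Q)$ — which rests on the strong convergence $z^\epsilon\to z^*$ in $C([0,T];L^2(\Omega))$ together with the uniform $L^\infty(Q)$ bound on $Z^\epsilon$ — and that the linear part with coefficients frozen at $z^\epsilon$ converges to the linear part with coefficients frozen at $z^*$ against the weak limit of $Z^\epsilon$. A similar but simpler splitting handles the term $u^\epsilon z_2^\epsilon - u^* z_2^* = \epsilon u z_2^\epsilon + u^*(z_2^\epsilon - z_2^*)$ and produces, in the limit, the forcing $u R$ and the control-dependent entries of $F$. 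Both rely decisively on the $L^\infty(Q)$ bound from Theorem~\ref{thm}; without it the linearised coefficients could not be controlled and the Gr\"onwall argument underlying both the a priori estimate and the uniqueness step would fail to close.
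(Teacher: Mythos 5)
Your proposal is correct and follows the same overall strategy as the paper: form the difference quotients $Z_i^\epsilon=(z_i^\epsilon-z_i^*)/\epsilon$, subtract the two state systems, obtain a uniform bound via Gr\"onwall, and pass to the limit. The implementation of the key a priori estimate differs, though. The paper represents $Z^\epsilon$ through the Duhamel formula for the semigroup $\Gamma(t)$ generated by $M$ and applies Gr\"onwall directly to that integral equation, treating the coefficient matrix $F^\epsilon$ (frozen at $z^\epsilon$) as uniformly bounded; it does not isolate the quadratic remainder explicitly, nor does it prove uniqueness of $Z$ or address why the whole family, and not merely a subsequence, converges. You instead run variational energy estimates (testing against $Z_i^\epsilon$ and $-\Delta Z_i^\epsilon$), invoke Aubin--Lions compactness, identify the limit, and then use linearity plus Gr\"onwall for uniqueness to upgrade subsequential convergence to convergence of the full family --- which is what Gateaux differentiability actually requires. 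This is more complete than the paper's argument. One caveat: your claimed \emph{uniform} $L^\infty(Q)$ bound on $Z^\epsilon$ does not follow from the energy spaces you derive ($L^\infty(0,T;H^1)$ does not embed in $L^\infty$ in two space dimensions, and the $L^\infty(Q)$ bound of Theorem~\ref{thm} applies to $z^\epsilon$ and $z^*$ separately, not to their difference divided by $\epsilon$). It is also not needed: to kill the remainder $\epsilon\,\beta\bigl(Z_2^\epsilon+\eta_C Z_3^\epsilon+\eta_A Z_4^\epsilon\bigr)Z_1^\epsilon$ in $L^2(Q)$ it suffices to have $Z^\epsilon$ bounded in $L^4(Q)$, which in dimension two follows from the Ladyzhenskaya interpolation of $L^\infty(0,T;L^2(\Omega))\cap L^2(0,T;H^1(\Omega))$ that your energy estimate already provides. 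With that substitution your argument closes, and it is, if anything, tighter than the published one.
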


\begin{proof}
Put $Z^\varepsilon_i=\frac{z^\varepsilon_i-z^*_i}{\varepsilon}$. 
By subtracting the two systems verified by 
$z^\varepsilon_i$ and $z^*_i$, we get
$$ 
\frac{\partial Z^\varepsilon}{\partial t}
= M Z^\varepsilon+FZ^\varepsilon+u R \;\; \text{subject to}\;\; 
Z^\varepsilon(0,x)=0,\;\;\text{for all}\; x\in \Omega.
$$
Consider the semigroup $(\Gamma(t), t \geq 0)$ generated by $M$.
Then the solution of this system is given by
$$
Z^\varepsilon (t,x)
=\int_0^t\Gamma(t-s)FZ^\varepsilon(s,x)ds+\int_0^t \Gamma(t-s)u R ds.
$$
Since the elements of the matrix $F^\varepsilon$ are uniformly bounded 
with respect to $\varepsilon$, according to Gr\"{o}nwall's inequality one has that
$Z^\varepsilon_i$ is bounded in $L^2(Q)$. Hence, $z^\varepsilon_i \rightarrow z^*_i$
in $L^2(Q)$. Letting $ \varepsilon \rightarrow 0 $, we have 
$$
\dfrac{\partial Z}{\partial t}= M Z+FZ+u R
\quad \text{ subject to } \quad Z(0,x)=0,\; 
\text{ for all } x\in \Omega.
$$
Adopting the same technique, we deduce that 
$Z^\varepsilon_i \rightarrow Z^*_i$ as $\varepsilon \rightarrow 0$.
\end{proof}

Let $p = (p_1, p_2, p_3,p_4)$ be the adjoint variable of $Z$ and denote by
$F^*$ the adjoint of the Jacobian matrix $F$. We can write the
dual system associated to our problem as
\begin{equation}
\label{eq:dual}
-\dfrac{\partial p}{\partial t}- M p-F^*p=D^*D\psi 
\quad \text{ subject to } p(T,x)=0,
\end{equation}
where
$$
D=
\begin{pmatrix}
0&0&0&0\\
0&1&0&0\\
0&0&0&0\\
0&0&0&0
\end{pmatrix}
\quad \text{and} \quad 
\psi=
\begin{pmatrix}
0\\
a\\
0\\
0
\end{pmatrix}. 
$$

\begin{lemma}
Under the hypothesis of Theorem~\ref{thm}, the system \eqref{eq:dual}
of adjoint variables admits a unique solution $p \in W^{1,2}([0, T],H(\Omega))$ 
with $ p_i \in  G(T,\Omega)$, $i = 1,2,3,4$.
\end{lemma}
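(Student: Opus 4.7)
The plan is first to turn the backward problem \eqref{eq:dual} into a standard forward evolution equation through the time reversal $\tau := T-t$ and $q(\tau,x) := p(T-\tau,x)$. The terminal condition $p(T,x)=0$ then becomes the initial condition $q(0,x)=0$, and the system reads
\begin{equation*}
\frac{\partial q}{\partial \tau} = M q + F^{*}(T-\tau,\cdot)\,q + D^{*}D\psi,
\qquad q(0,x)=0,
\end{equation*}
with homogeneous Neumann conditions on $\partial\Omega$, i.e.\ exactly the type of semilinear parabolic problem already handled in Theorem~\ref{thm}.

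Next I would invoke semigroup theory in essentially the same way as in the proof of Theorem~\ref{thm}. The operator $M$ is the diagonal Laplacian on $H(\Omega)$ subject to Neumann boundary conditions; it is dissipative, self-adjoint, and the infinitesimal generator of a $C_{0}$-semigroup of contractions $\{\Gamma(\tau)\}_{\tau\geq 0}$. By Theorem~\ref{thm}, $z^{*}=(z_{1}^{*},z_{2}^{*},z_{3}^{*},z_{4}^{*})\in L^{\infty}(Q)^{4}$, so every entry of $F^{*}(T-\tau,\cdot)$ is bounded in $L^{\infty}(Q)$. Consequently the map $q\mapsto F^{*}(T-\tau,\cdot)q+D^{*}D\psi$ is affine and globally Lipschitz in $q$, uniformly in $\tau\in[0,T]$. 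The mild formulation
\begin{equation*}
q(\tau,x)=\int_{0}^{\tau}\Gamma(\tau-s)\bigl[F^{*}(T-s,\cdot)\,q(s,\cdot)+D^{*}D\psi\bigr]\,\mathrm{d}s
\end{equation*}
therefore admits a unique strong solution $q\in W^{1,2}([0,T];H(\Omega))$ by the Banach fixed-point argument for semilinear evolution equations with Lipschitz right-hand side, exactly as in Theorem~\ref{thm}.

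To upgrade the regularity to $q_{i}\in G(T,\Omega)$, I would replay the a priori estimates carried out in the proof of Theorem~\ref{thm}: first an $L^{\infty}(Q)$-bound obtained by a sub/super-solution comparison (setting $k=\max\{\|D^{*}D\psi\|_{L^{\infty}},\,\|F^{*}\|_{L^{\infty}}\,\|q\|_{L^{\infty}}\}$ and forming $q_i\pm k\tau$), and then the energy estimate based on Green's formula
\begin{equation*}
\int_{0}^{\tau}\!\!\int_{\Omega}\!\Bigl(\frac{\partial q_{i}}{\partial \tau}\Bigr)^{2}\mathrm{d}x\,\mathrm{d}s
+d_{i}^{2}\!\int_{0}^{\tau}\!\!\int_{\Omega}(\Delta q_{i})^{2}\mathrm{d}x\,\mathrm{d}s
=d_{i}\!\int_{\Omega}q_{i}^{2}\,\mathrm{d}x
+\int_{0}^{\tau}\!\!\int_{\Omega}\bigl(F^{*}q+D^{*}D\psi\bigr)_{i}^{2}\,\mathrm{d}x\,\mathrm{d}s,
\end{equation*}
followed by a Gr\"onwall closure, which yields $q_{i}\in L^{2}(0,T;H^{2}(\Omega))\cap L^{\infty}(0,T;H^{1}(\Omega))\cap L^{\infty}(Q)$. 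Undoing the time reversal gives the claimed $p\in W^{1,2}([0,T];H(\Omega))$ with $p_{i}\in G(T,\Omega)$.

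The main obstacle is the verification of the global Lipschitz hypothesis needed to apply the semigroup well-posedness result, since $F^{*}$ contains the variable coefficient $\beta(z_{2}^{*}+\eta_{C}z_{3}^{*}+\eta_{A}z_{4}^{*})$, whose boundedness is not immediate and relies crucially on the $L^{\infty}(Q)$ regularity of $z^{*}$ established in Theorem~\ref{thm}; once this uniform bound is invoked, the remainder of the proof is a direct transcription of the arguments already developed for the primal system to the (linear, non-autonomous) adjoint system.
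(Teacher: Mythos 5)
Your proposal is correct and follows essentially the same route as the paper, whose entire proof is the one-line remark that the change of variables $s=T-t$ reduces the backward adjoint system to a forward problem treatable by the same semigroup/mild-solution argument used earlier; you have simply written out that reduction, the Lipschitz verification via the $L^{\infty}(Q)$ bound on $z^{*}$, and the a priori estimates in full. (The only caveat is that the target space $G(T,\Omega)$ is never defined in the paper, so your reading of it as $L^{2}(0,T;H^{2}(\Omega))\cap L^{\infty}(0,T;H^{1}(\Omega))\cap L^{\infty}(Q)$ is a reasonable, and probably the intended, interpretation.)
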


\begin{proof}
The result follows by the change of variables $s=T-t$ so as to apply 
the same method performed in the proof of Theorem~\ref{thm:3}.
\end{proof}

We are now in a position to obtain a necessary optimality condition 
for the optimal control $u^*$.

\begin{theorem}
\label{thm:5}
If $u^*$ is an optimal control and $z^*\in W^{1,2}([0,T];H(\Omega))$  
is its  corresponding solution, then
\begin{equation}
\label{eq:u:min:max}
u^*=\min \left( u_{\max},\max \left( 0,\dfrac{z_2^*(p_2-p_1}{b}\right) \right). 
\end{equation}
\end{theorem}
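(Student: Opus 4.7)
The plan is to derive the projection formula \eqref{eq:u:min:max} from a first‑order variational inequality, obtained by coupling the sensitivity system of Theorem~\ref{thm:3} with the adjoint system \eqref{eq:dual} via integration by parts, and then arguing pointwise using the convexity of $U_{ad}$.

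First, I would use the optimality of $u^*$. For any admissible $v \in U_{ad}$, the convex combination $u^\varepsilon := u^* + \varepsilon(v-u^*)$ remains in $U_{ad}$ for $\varepsilon \in [0,1]$, so
$$
\liminf_{\varepsilon \to 0^+} \frac{J(z^\varepsilon,u^\varepsilon) - J(z^*,u^*)}{\varepsilon} \geq 0.
$$
Invoking the Gateaux differentiability established in Theorem~\ref{thm:3}, I would compute this limit directly from \eqref{J:functional} as
$$
\int_Q a\,Z_2 \, dx\,dt + b\int_Q u^* (v-u^*)\, dx\,dt \geq 0,
$$
where $Z=(Z_1,Z_2,Z_3,Z_4)$ is the sensitivity in the direction $v-u^*$, and therefore solves $\partial_t Z = MZ + FZ + (v-u^*)R$ with $Z(0,\cdot)=0$.

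Next, I would use the adjoint state $p$ from \eqref{eq:dual} to eliminate the term $\int_Q a\,Z_2$. Multiplying the sensitivity equation by $p$, integrating over $Q$, and integrating by parts in time (boundary terms vanish thanks to $Z(0,\cdot)=0$ and $p(T,\cdot)=0$) and in space (using the self-adjointness of $M$ under the Neumann boundary conditions encoded in $D(A)$, together with the definition of $F^*$), the $D^*D\psi$ source of \eqref{eq:dual} yields
$$
\int_Q a\,Z_2\,dx\,dt = \int_Q (v-u^*)\,R\cdot p\,dx\,dt = \int_Q (v-u^*)\,z_2^*(p_2-p_1)\,dx\,dt.
$$
Substituting back into the optimality inequality produces the variational inequality
$$
\int_Q (v-u^*)\bigl[\,b\,u^* - z_2^*(p_2-p_1)\,\bigr]\,dx\,dt \geq 0 \qquad \forall\,v \in U_{ad}.
$$

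Finally, a standard pointwise argument on this inequality gives \eqref{eq:u:min:max}. On the set where $0 < u^*(t,x) < u_{\max}$, the bracket must vanish, so $u^* = z_2^*(p_2-p_1)/b$; where $u^*=0$, the bracket is nonnegative, so the unconstrained candidate $z_2^*(p_2-p_1)/b \leq 0$; where $u^* = u_{\max}$, the bracket is nonpositive, so the candidate is $\geq u_{\max}$. Assembling these three cases yields the truncation $u^* = \min(u_{\max},\max(0,\,z_2^*(p_2-p_1)/b))$. The main technical obstacle is the rigorous integration by parts coupling the primal sensitivity and the adjoint: this requires the regularity $Z_i \in L^2(0,T;H^2(\Omega)) \cap H^1(0,T;L^2(\Omega))$ from Theorem~\ref{thm:3} and $p \in W^{1,2}([0,T];H(\Omega))$ from the preceding lemma, together with careful bookkeeping of signs in $F^*$ and $R\cdot p$ so that the coefficient produced in the variational inequality matches the one appearing in \eqref{eq:u:min:max}.
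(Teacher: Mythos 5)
Your proposal follows essentially the same route as the paper: differentiate $J$ in an admissible direction, use the adjoint system \eqref{eq:dual} together with integration by parts (in time and space, with the vanishing boundary terms you note) to replace $\int_Q aZ_2\,dx\,dt$ by $\int_Q (v-u^*)\,R\cdot p\,dx\,dt$, and extract the projection formula from the resulting variational inequality over the convex set $U_{ad}$. Your closing three-case pointwise analysis is in fact more careful than the paper's one-line truncation; the only blemish is a sign slip in the substitution step --- from $\int_Q aZ_2\,dx\,dt=\int_Q(v-u^*)\,z_2^*(p_2-p_1)\,dx\,dt$ and $\int_Q aZ_2\,dx\,dt+b\int_Q u^*(v-u^*)\,dx\,dt\ge 0$ the bracket should read $z_2^*(p_2-p_1)+bu^*$ rather than $bu^*-z_2^*(p_2-p_1)$ --- but the paper's own derivation (asserting $bu^*=-R^*p$ yet concluding $u^*=z_2^*(p_2-p_1)/b$) contains exactly the same discrepancy.
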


\begin{proof}
Let $u^*$ be an optimal control and let $z^*$ 
be the corresponding optimal state. 
Set $u^\varepsilon=u^*+\varepsilon u \in U_{ad}$ 
and let $z^\varepsilon$ be the corresponding state trajectory. We have
\begin{align*}
J'(u^*)(u)
&=\lim\limits_{\varepsilon\rightarrow 0}
\dfrac{1}{\varepsilon}\left(J(u^{\varepsilon})-J(u^*)\right)\\
&=\lim\limits_{\varepsilon\rightarrow 0}
\dfrac{1}{\varepsilon}\left(a\int_0^T\int_\Omega
\left(z_2^\varepsilon-z_2^*\right)dxdt+\dfrac{b}{2}
\int_0^1\int_\Omega\left((u^\varepsilon)^2-(u^*)^2\right)dxdt\right)\\
&=\lim\limits_{\varepsilon\rightarrow 0}\left(a\int_0^T\int_\Omega
\left(\dfrac{z_2^\varepsilon-z_2^*}{\varepsilon}\right)dxdt
+\dfrac{b}{2}\int_0^1\int_\Omega\left(2 u u^*+\varepsilon u^2\right)dxdt\right).
\end{align*}
Since $\lim\limits_{\varepsilon\rightarrow 0}
\dfrac{z_2^\varepsilon-z_2^*}{\varepsilon}
=\lim\limits_{\varepsilon\rightarrow 0}
\dfrac{z_2(u^*+\varepsilon h)-z_2^*}{\varepsilon}=Z_2$, 
$\lim\limits_{\varepsilon\rightarrow 0} z_2^\varepsilon=z_2^*$ 
and $z_2^\varepsilon,z_2^*\in L^\infty(Q)$, then $J$ is Gateaux differentiable 
with respect to $u^*$ with
\begin{align*}
J'(u^*)(u)&=\int_0^T\int_\Omega a Z_2 dxdt+b\int_0^T\int_\Omega u u^*dxdt\\
&=\int_0^T\langle D\psi ,DZ \rangle dt
+\int_0^1\langle bu^* ,u \rangle_{L^2(\Omega) }dt.
\end{align*} 
If we take $u=v-u^*$, then we obtain
$$
J'(u^*)(v-u^*)=\int_0^T\langle D\psi ,DZ \rangle dt
+\int_0^1\langle bu^* ,v-u^* \rangle_{L^2(\Omega) }dt.
$$
Since
\begin{align*}
\int_0^T\langle D\psi ,DZ \rangle dt
&=\int_0^T\left\langle D^*D\psi ,Z \right\rangle dt\\
&=\int_0^T\left\langle -\dfrac{\partial p}{\partial t}-M p-F^*p ,Z \right\rangle dt\\
&=\int_0^T\left\langle p ,\dfrac{\partial Z}{\partial t }-MZ-FZ \right\rangle dt\\
&=\int_0^T\left\langle p ,R(v-u^*) \right\rangle dt\\
&=\int_0^T\left\langle R^*p ,v-u^* \right\rangle_{L^2(\Omega) }dt
\end{align*}
and $U_{ad}$ is convex, then $J'(u^*)(v-u^*)\geq 0$ 
for all $v\in U_{ad}$, which is equivalent to
$$
\int_0^T\langle R^*p+bu^* ,v-u^* \rangle_{L^2(\Omega) }dt
\geq 0 \text{ for all } v\in U_{ad}.
$$
Thus, $bu^*=-{R^*} p$ and, consequently, $u^*=\dfrac{z_2^*(p_2-p_1)}{b}$. 
Since $u^*\in U_{ad}$, we have that \eqref{eq:u:min:max} holds.
\end{proof}

Note that Theorem~\ref{thm:5}
provides a constructive method, giving an explicit 
expression \eqref{eq:u:min:max} for the optimal control.

% ----------------------------------------

\section{Conclusion and future work}
\label{sec:7}

We have extended the time deterministic epidemic SICA model due 
to Silva and Torres \cite{10} to spatiotemporal dynamics, 
which take into account not only the local reaction of 
appearance of new infected individuals but also the global diffusion
occurrence of the other infected individuals. This allows
to incorporate an additional amount of arguments into the system. 
More precisely, firstly we have modeled the spatiotemporal behavior by 
incorporating the well-known Laplace operator, which has been 
employed in the literature, in different contexts, to better 
understand what happens during any possible displacement 
of different species and individuals. Here, we justify and interpret
its use in the context of HIV/AIDS epidemics. Secondly, we have presented 
an optimal control problem to minimize the number of infected individuals 
through a suitable cost functional. Proved results include: existence 
and uniqueness of a strong global solution to the system, 
obtained using some adapted tools from semigroup theory;
some characteristics of the existing solution;
existence of an optimal control, investigated
using an effective method based on some properties within the weak topology;
and necessary optimality conditions to quantify explicitly the optimal control.

As future work, we plan to develop numerical methods
for spatiotemporal optimal control problems, implementing
the necessary optimality conditions we have proved here.
This is under investigation and will be addressed elsewhere.
Another interesting line of research concerns the bifurcation 
analysis for different parameters.

% ----------------------------------------

\appendix

\section{Simulation method and code}
\label{sec:appendix:code}

The focus of our work is more theoretical, 
linked to the proposed spatiotemporal SICA epidemic model \eqref{SICA2}.
In Section~\ref{sec:5}, to motivate our study on optimal control, 
we have incorporated some selected control values in order 
to present some adequate scenarios showing the dynamic evolution 
of the system. In our simulations, we have adopted the first 
order explicit Euler method to discretize the temporal derivatives 
and the second order explicit Euler method to discretize 
the Laplacian operator. Follows our \textsf{Octave}/\textsf{Matlab} code:

\small \lstinputlisting{code.m}

The reader interested in the scientific computing tool
\textsf{GNU Octave} or \textsf{Matlab} is referred to \cite{MR3495870}.

% ----------------------------------------

\section*{Acknowledgments}

This research was funded by The Portuguese Foundation for Science and Technology 
(FCT---Funda\c{c}\~{a}o para a Ci\^{e}ncia e a Tecnologia), grant number UIDB/04106/2020 (CIDMA).
The authors are very grateful to three anonymous Reviewers for several constructive questions
and remarks that helped them to improve their work.

% ----------------------------------------

\section*{Conflict of interest}

The authors declare that there are no conflicts of interest.

% ----------------------------------------

% ----------------------------------------

\end{document}